\newtheorem{theorem}{Theorem}[section]
\newtheorem{lemma}[theorem]{Lemma}
\theoremstyle{definition}
\newtheorem{definition}[theorem]{Definition}
\newtheorem{proposition}[theorem]{Proposition}
\theoremstyle{remark}
\numberwithin{equation}{section}
\newcommand{\D}{\mathcal{D}}
\newcommand{\HQ}{\mathbb{H}}
\newcommand{\B}{\mathcal{B}}
\newcommand{\oqu}{\overline{q}}
\begin{document}
\title[Frame Multipliers]{Frame Multipliers for discrete frames on  Quaternionic Hilbert Spaces}
\author{M. Khokulan$^1$, K. Thirulogasanthar$^2$.}
\address{$^{1}$ Department of Mathematics and Statistics, University of Jaffna, Thirunelveli, Jaffna, Srilanka. }
\address{$^{2}$ Department of Computer Science and Software Engineering, Concordia University, 1455 de Maisonneuve Blvd. West, Montreal, Quebec, H3G 1M8, Canada.}
\email{mkhokulan@gmail.com, santhar@gmail.com.}
\subjclass{Primary 42C40, 42C15}
\date{\today}
\begin{abstract}
In this note, following the complex theory, we examine discrete controlled frames, discrete weighted frames and frame multipliers in a non-commutative setting, namely in a left quaternionic Hilbert space. In particular, we show that the controlled frames are equivalent to usual frames under certain conditions. We also study  connection between frame multipliers and weighted frames in the same Hilbert space.
\end{abstract}
\keywords{Quaternions, Quaternion Hilbert spaces, Frames, Frame Multipliers}
\thanks{K. Thirulogasanthar would like to thank the, FRQNT, Fonds de la Recherche  Nature et  Technologies (Quebec, Canada) for partial financial support under the grant number 2017-CO-201915. Part of this work was done while he was visiting the University of Jaffna, Sri Lanka. He expresses his thanks for the hospitality.}
\maketitle
\pagestyle{myheadings}
\section{Introduction}
The notion of frames in Hilbert spaces was introduced by Duffin and Schaeffer in 1952 to address some very deep problems in non-harmonic Fourier series \cite{DU}. However the fundamental concept of frames was revived in 1980s by Daubechies, Grossmann and Meyer\cite{D1,D2}, who showed its significance in signal processing.
Frame is a spanning set of vectors which are generally overcomplete (redundant) in a Hilbert space. Therefore a typical frame contains more frame vectors than the dimension of the space and each vector in the space will have infinitely many representations with respect to the frame. This redundancy of frames is the key to their success in applications.\\

Nowadays frames have broad applications in Mathematics and Engineering in several areas including sampling theory\cite{Ald},  operator theory\cite{Han},  harmonic  analysis\cite{Lac},  wavelet theory \cite{D3},  wireless communication\cite{Sh1,Sh2},  data transmission with erasures \cite{Bod,GO1},  filter banks \cite{Bol},  signal processing \cite{Ben,GO2}, image processing \cite{Can},  geophysics \cite{Mar1} and quantum computing \cite{Eld}. Hilbert spaces can be defined over the fields $\mathbb{R}$, the set of all real numbers, $\mathbb{C}$, the set of all complex numbers, and $\HQ$, the set of all quaternions only \cite{Ad}. The fields $\mathbb{R}$ and $\mathbb{C}$ are associative and commutative but quaternions form a non-commutative associative algebra and this feature highly restricted mathematicians to work out a well-formed theory of functional analysis and harmonic analysis on quaternionic Hilbert spaces. The quaternionic frames have been developed in the mathematical point of view very recently \cite{KH, Kho}. The applications are yet to be identified and analyzed in terms of these frames.\\

Frame multiplier is an operator which was introduced by P.Balazs  for frames in Hilbert spaces \cite{Bal1}. In these multipliers the analysis coefficients are multiplied by a fixed sequence (called the symbol) before re-synthesis. Fundamental properties of this multiplier were investigated in \cite{Bal1}. Frame multipliers are interesting not only from a theoretical point of view but also for applications. For example, it is useful in psycho-acoustical modeling \cite{Bal2}, denoising \cite{Bal3},  computational auditory scene analysis \cite{Wan}, virtual acoustics \cite{Maj} and seismic data analysis \cite{Mar}.\\

The extensions of frames in Hilbert spaces include weighted and controlled frames and these extensions were introduced recently to improve the numerical efficiency of iterative algorithms for inverting the frame operator\cite{Bal4}. In this paper we investigate the connection between the weighted frames and frame multipliers for discrete frames in left quaternionic Hilbert spaces along the lines of the argument of P. Balazs et al. \cite{Bal4}.\\

This article is organized as follows: In section 2, we collect basic notations and some preliminary results about quaternions and frames as needed for the development of the results obtained in this article. In section 3, we present the concept of controlled frames in quaternionic Hilbert spaces and we will show that controlled frames are equivalent to usual frames under certain conditions. In section 4, we investigate the weighted frames and frame multipliers. We also investigate how the frame multipliers are connected with weighted frames in quaternionic Hilbert spaces.  Section 5 ends the article with a conclusion.

\section{Mathematical preliminaries}
\noindent
We recall few facts about quaternions, quaternionic Hilbert spaces and quaternionic functional analysis which may not be very familiar to the reader. 
\subsection{Quaternions}
Let $\mathbb{H}$ denote the field of quaternions. Its elements are of the form $q=x_0+x_1i+x_2j+x_3k,~$ where $x_0,x_1,x_2$ and $x_3$ are real numbers, and $i,j,k$ are imaginary units such that $i^2=j^2=k^2=-1$, $ij=-ji=k$, $jk=-kj=i$ and $ki=-ik=j$. The quaternionic conjugate of $q$ is defined to be $\overline{q} = x_0 - x_1i - x_2j - x_3k$. Quaternions do not commute in general. However $q$ and $\oqu$ commute, and quaternions commute with real numbers. $|q|^2=q\oqu=\oqu q$ and $\overline{qp}=\overline{p}~\oqu.$
\subsection{Left Quaternionic Hilbert Space}
Let $V_{\mathbb{H}}^{L}$ is a vector space under left multiplication by quaternionic scalars, where $\mathbb{H}$ stands for the quaternion algebra. For $f,g,h\in V_{\mathbb{H}}^{L}$ and $q\in \mathbb{H},$ the inner product
\begin{eqnarray*}
	\left\langle .|.\right\rangle:V_{\mathbb{H}}^{L}\times V_{\mathbb{H}}^{L}\longrightarrow \mathbb{H}
\end{eqnarray*}
satisfies the following properties:
\begin{enumerate}
	\item [(a)]$\overline{\left\langle f|g\right\rangle}=\left\langle g|f\right\rangle$
	\item [(b)]$\left\|f\right\|^{2}=\left\langle f|f\right\rangle>0~$ unless $~f=0,~$ a real norm
	\item [(c)]$\left\langle f|g+h\right\rangle=\left\langle f|g\right\rangle+\left\langle f|h\right\rangle$
	\item [(d)]$\left\langle qf|g\right\rangle=q\left\langle f|g\right\rangle$
	\item[(e)] $\left\langle f|qg\right\rangle=\left\langle f|g\right\rangle\overline{q}.$
\end{enumerate}
where $\overline{q}$ stands for the quaternionic conjugate.
We assume that the
space $V_{\mathbb{H}}^{L}$ is complete under the norm given above. Then,  together with $\langle\cdot\mid\cdot\rangle$ this defines a right quaternionic Hilbert space, which we shall assume to be separable. Quaternionic Hilbert spaces share most of the standard properties of complex Hilbert spaces. In particular, the Cauchy-Schwartz inequality holds on quaternionic Hilbert spaces as well as the Riesz representation theorem for their duals.\\
In the left quaternionic Hilbert space scalar multiplication of a vector is always from the left  to a vector. Therefore, the Dirac bra-ket notation can be adapted as
\begin{equation}\label{e31}
|q\phi\rangle=|\phi\rangle\overline{q},~~ \langle\phi|=|\phi\rangle^{\dagger},~~\langle q\phi|=q\langle \phi|   
\end{equation}
for a left quaternionic Hilbert space,with $|\phi\rangle $ denoting the vector $\phi$ and $\langle\phi| $ is its dual vector.\\  
Let $T$ be an operator on a left quaternionic Hilbert space $V_\mathbb{H}^L$ with domain $V_\mathbb{H}^L$.
The adjoint $T^{\dagger}$ of $T$ is defined as
\begin{equation}\label{Ad1}
\langle \psi \mid T\phi \rangle=\langle T^{\dagger} \psi \mid\phi \rangle;\quad\text{for all}~~~\phi ,\psi \in V_\mathbb{H}^L.
\end{equation}
An operator $T$ is said to be self-adjoint if $T=T^{\dagger}$.\\ 
Let $\D(T)$ denotes the domain of $T$. The operator $T$ is said to be left  linear if
\begin{eqnarray*}
	T(\phi+\psi)&=&T\phi+T\psi,\\
	T(q\phi)&=&q(T\phi)
\end{eqnarray*}
for all $\phi,\psi\in \mathcal{D}(T)$ and $q\in \mathbb{H}.$
The set of all left linear operators in $V_\mathbb{H}^L$  will be denoted by $\mathcal{L}(V_\mathbb{H}^L)$. For a given $T\in \mathcal{L}(V_\mathbb{H}^L)$, the range and the kernel will be
\begin{eqnarray*}
	\mbox{ran}(T)&=&\{\psi \in V_\mathbb{H}^L~|~T\phi =\psi \mbox{~for}~~\phi \in\mathcal{D}(T)\}\\
	\ker(T)&=&\{\phi \in\mathcal{D}(T)~|~T\phi =0\}.
\end{eqnarray*}
We call an operator $T\in \mathcal{L}(V_\mathbb{H}^L)$ is bounded if
\begin{equation*}
\|T\|=\sup_{\|\phi \|=1}\|T\phi \|<\infty.
\end{equation*}
or equivalently, there exists $M\geq 0$ such that $\|T\phi \|\leq M\|\phi \|$ for $\phi \in\mathcal{D}(T)$. The set of all bounded left linear operators in $V_\mathbb{H}^L$ will be denoted by $\mathcal{B}(V_\mathbb{H}^L)$.\\
\begin{proposition}\cite{Fa}
	Let $T\in\B(V_\mathbb{H}^L)$. Then $T$ is self-adjoint if and only if for each $\phi\in V_{\mathbb{H}}^{L}$, $\langle T\phi\mid\phi\rangle\in\mathbb{R}$. 
\end{proposition}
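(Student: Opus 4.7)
The proposition splits into two directions, and my plan is to treat them separately: the forward direction should be immediate from the definition of the adjoint together with property (a), while the converse should require a polarization argument adapted to the non-commutativity of $\mathbb{H}$.

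For the forward direction, assuming $T = T^{\dagger}$, I would simply observe that
\[
\overline{\langle T\phi\mid\phi\rangle} \;=\; \langle\phi\mid T\phi\rangle \;=\; \langle T^{\dagger}\phi\mid\phi\rangle \;=\; \langle T\phi\mid\phi\rangle,
\]
so $\langle T\phi\mid\phi\rangle$ is fixed by conjugation and hence real.

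For the converse, my plan is to introduce $S := T - T^{\dagger}$. A brief preliminary verification shows that $T^{\dagger}$ is itself left linear (using (d), (e) and the defining relation \eqref{Ad1}) and bounded, so $S \in \B(V_{\mathbb{H}}^{L})$. The hypothesis that $\langle T\phi\mid\phi\rangle$ is real, combined with $\langle T^{\dagger}\phi\mid\phi\rangle = \overline{\langle T\phi\mid\phi\rangle}$, then yields $\langle S\phi\mid\phi\rangle = 0$ for every $\phi$, and the task reduces to showing that a bounded left-linear operator with vanishing diagonal quadratic form must be zero. I would first substitute $\phi + \psi$ into this identity; the diagonal terms drop out and the cross terms give the antisymmetry
\[
\langle S\phi\mid\psi\rangle + \langle S\psi\mid\phi\rangle = 0.
\]
To extract $\langle S\psi\mid\phi\rangle$ itself, I would then substitute $\phi + q\psi$ for an arbitrary $q \in \mathbb{H}$; using (d), (e) and the left-linearity of $S$ to expand, the diagonal terms again vanish, and after applying the antisymmetry relation the cross terms collapse to
\[
q\,\langle S\psi\mid\phi\rangle \;=\; \langle S\psi\mid\phi\rangle\,\overline{q}\qquad\text{for every }q \in \mathbb{H}.
\]
A short algebraic lemma finishes the argument: writing $\alpha = a + bi + cj + dk$ for $\alpha := \langle S\psi\mid\phi\rangle$ and testing the relation at $q = i$ and $q = j$ separately forces $a = b = c = d = 0$, so $\alpha = 0$. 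Hence $\langle S\psi\mid\phi\rangle = 0$ for all $\phi,\psi$, which gives $S = 0$ and so $T = T^{\dagger}$.

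The main obstacle is the polarization step. In the complex case one recovers $\langle S\phi\mid\psi\rangle$ from the two substitutions $\phi \pm \psi$ and $\phi \pm i\psi$, but here non-commutativity twists the second inner-product slot by $\overline{q}$, and a naive attempt to imitate the complex argument will not directly isolate $\langle S\psi\mid\phi\rangle$. The key insight I want to exploit is that requiring the resulting identity to hold for \emph{every} $q \in \mathbb{H}$ is a strictly stronger constraint than the complex analogue, strong enough to force $\alpha = 0$. Once this reformulation is seen, the rest is bookkeeping, but keeping careful track of which scalars act from the left and which from the right, especially in the expansion of $\langle S(\phi + q\psi)\mid\phi + q\psi\rangle$, is where I expect care to be required.
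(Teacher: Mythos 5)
The paper does not actually prove this proposition; it is quoted from Fashandi \cite{Fa}, so there is no internal argument to compare against. Your strategy --- the forward direction from property (a) plus the definition of the adjoint, and the converse by setting $S = T - T^{\dagger}$, deducing $\langle S\phi\mid\phi\rangle = 0$, and polarizing first with $\phi+\psi$ and then with $\phi+q\psi$ to reach $q\,\alpha = \alpha\,\overline{q}$ for $\alpha = \langle S\psi\mid\phi\rangle$ and all $q\in\mathbb{H}$ --- is the standard quaternionic polarization argument and the expansions are correct: left linearity of $S$ together with (c), (d), (e) and the vanishing diagonal terms gives $\langle S\phi\mid\psi\rangle\overline{q} + q\langle S\psi\mid\phi\rangle = 0$, and the antisymmetry from the first substitution converts this to $q\alpha = \alpha\overline{q}$.

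The one genuine flaw is the closing claim that testing at $q=i$ and $q=j$ alone forces $\alpha=0$. It does not. Writing $\alpha = a+bi+cj+dk$, the identity at $q=i$ reads $i\alpha+\alpha i = -2b+2ai = 0$ and kills only $a$ and $b$, while the identity at $q=j$ reads $j\alpha+\alpha j = -2c+2aj = 0$ and kills only $a$ and $c$; the element $\alpha = k$ satisfies both $i k = -j = k\overline{i}$ and $jk = i = k\overline{j}$, so the coefficient $d$ survives your two tests. You must also test $q=k$ (which gives $k\alpha+\alpha k = 2ak-2d = 0$ and finishes the job), or argue more efficiently: taking $q=\overline{\alpha}$ yields $|\alpha|^{2} = \alpha^{2}$, which forces $\alpha\in\mathbb{R}$, and then $q\alpha=\alpha\overline{q}$ with $q=i$ forces $\alpha=0$. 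With that one-line repair the proof is complete.
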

\begin{definition}\label{Def1}
	Let $T_{1}$ and $T_{2}$ be self-adjoint operators on $V_\mathbb{H}^L$. Then $T_{1}\leq T_{2}$ ($T_{1}$ less or equal to $T_{2}$) or equivalently $T_{2}\geq T_{1}$ if $\langle T_{1}\phi|\phi\rangle\leq\langle T_{2}\phi|\phi\rangle$ for all $\phi\in V_\mathbb{H}^L$. In particular $T_{1}$ is called positive if $T_{1}\geq 0$ or $\left\langle T_{1}f|f\right\rangle \geq 0,$  for all $f\in V_\mathbb{H}^{L}. $
\end{definition}
\begin{theorem}\cite{Ric}\label{IT1}
	Let $A\in\B(V_\mathbb{H}^L)$. If $A\geq 0$ then there exists a unique operator in $\B(V_\mathbb{H}^L)$, indicated by $\sqrt{A}=A^{1/2}$ such that $\sqrt{A}\geq 0$ and $\sqrt{A}\sqrt{A}=A$.
\end{theorem}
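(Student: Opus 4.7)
The plan is to adapt the classical Riesz iteration scheme to the quaternionic setting. After rescaling by $\|A\|^{-1}$ (assuming $A\neq 0$), I would reduce to the case $0 \leq A \leq I$, set $B = I - A$ so that $0 \leq B \leq I$, and define the recursion $C_{0} = 0$, $C_{n+1} = \tfrac{1}{2}(B + C_{n}^{2})$. If $C_{n}$ converges strongly to some self-adjoint $C \in \B(V_{\mathbb{H}}^{L})$ with $0 \leq C \leq I$, then passing to the limit gives $C = \tfrac{1}{2}(B + C^{2})$, equivalently $(I-C)^{2} = I - B = A$, and the operator $\sqrt{A} := I - C$ will be the required positive square root.

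Three ingredients handle existence. First, by induction each $C_{n}$ is a real-coefficient polynomial in $B$ with nonnegative coefficients, so $C_{n}$ is self-adjoint and positive and commutes with $B$ and with every operator that commutes with $B$. Second, the telescoping identity $C_{n+1} - C_{n} = \tfrac{1}{2}(C_{n} - C_{n-1})(C_{n} + C_{n-1})$ together with the fact that a product of commuting positive operators is positive yields $0 \leq C_{n} \leq C_{n+1}$ by induction, while the identity $I - C_{n+1} = \tfrac{1}{2}A + \tfrac{1}{2}(I-C_{n})(I+C_{n})$ gives $C_{n+1} \leq I$. Third, for each $\phi$ the real sequence $\langle C_{n}\phi \mid \phi\rangle$ is increasing and bounded, hence Cauchy; applying the quaternionic Cauchy--Schwarz inequality to the positive sesquilinear form $(x,y) \mapsto \langle (C_{m}-C_{n})x \mid y\rangle$ yields $\|(C_{m}-C_{n})\phi\|^{4} \leq \langle(C_{m}-C_{n})\phi \mid \phi\rangle \cdot \|\phi\|^{2}$, so $\{C_{n}\phi\}$ is norm-Cauchy and has a strong limit $C$, which is self-adjoint since strong limits of self-adjoint bounded operators are self-adjoint.

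For uniqueness, suppose $T \in \B(V_{\mathbb{H}}^{L})$ is positive with $T^{2} = A$. Then $T$ commutes with $A$, hence with every real-coefficient polynomial in $A$, and thus (by the construction) with $S := \sqrt{A}$. Setting $\psi = (S - T)\phi$, the identity $S^{2} = T^{2}$ combined with commutativity gives $(S - T)(S + T) = 0$, so $\langle (S+T)\psi \mid \psi\rangle = 0$; since $\langle S\psi \mid \psi\rangle, \langle T\psi \mid \psi\rangle \geq 0$, both must vanish, and Cauchy--Schwarz for the positive forms $\langle S \cdot \mid \cdot\rangle$ and $\langle T \cdot \mid \cdot \rangle$ then forces $S\psi = T\psi = 0$. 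Consequently $\|(S-T)\phi\|^{2} = \langle (S-T)^{2}\phi \mid \phi\rangle = \langle (S-T)\psi \mid \phi\rangle = 0$, so $S = T$.

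The main obstacle I anticipate is the Cauchy--Schwarz step in (iii), which relies on the inequality $|\langle T x \mid y\rangle|^{2} \leq \langle Tx \mid x\rangle \langle Ty \mid y\rangle$ for positive $T$ in the quaternion-valued setting. Its proof requires the standard parametric argument adapted to $\mathbb{H}$: minimizing $\langle T(x+qy) \mid (x+qy)\rangle \geq 0$ over $q \in \mathbb{H}$, one chooses $q$ in the direction of $-\overline{\langle Tx \mid y\rangle}$ and uses $\Re(pq) = \Re(qp)$ together with the sesquilinearity rules (d) and (e). Since each $C_{n}$ is a real-coefficient polynomial in $B$, all operators in the construction mutually commute and $\langle C_{n}\phi \mid \phi\rangle$ is real by the preceding Proposition, so the noncommutativity of $\mathbb{H}$ does not interfere with the scalar monotone-convergence argument.
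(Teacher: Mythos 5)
The paper gives no proof of this theorem at all: it is stated as a quotation from the reference of Ghiloni--Moretti--Perotti, so the comparison here is between your argument and a bare citation. What you supply is the classical Riesz--Nagy iteration $C_{0}=0$, $C_{n+1}=\tfrac{1}{2}(B+C_{n}^{2})$ with $B=I-A/\|A\|$, transplanted to the left quaternionic setting, and the transplantation is sound: every operator in the construction is a real-coefficient polynomial in $B$, real scalars are central in $\mathbb{H}$, the quantities $\langle C_{n}\phi\mid\phi\rangle$ are real because each $C_{n}$ is self-adjoint, and the generalized Cauchy--Schwarz inequality $|\langle Tx\mid y\rangle|^{2}\leq\langle Tx\mid x\rangle\,\langle Ty\mid y\rangle$ for positive $T$ goes through exactly as you indicate, since $\langle T(x+qy)\mid x+qy\rangle=\langle Tx\mid x\rangle+2\,\mathrm{Re}\bigl(q\langle Ty\mid x\rangle\bigr)+|q|^{2}\langle Ty\mid y\rangle\geq 0$ for all $q\in\mathbb{H}$, and one optimizes over $q=-t\,\overline{\langle Ty\mid x\rangle}$ with $t>0$. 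The uniqueness argument is the standard one and is complete. In short, your route furnishes the paper with a self-contained proof where it currently defers entirely to the literature.

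One step needs repair to avoid circularity: in the monotonicity argument you invoke the fact that a product of commuting positive operators is positive, but that fact is itself normally proved \emph{using} the existence of positive square roots, which is what you are trying to establish. The fix is already latent in your first ingredient: show by induction that $C_{n+1}-C_{n}$ is a polynomial in $B$ with nonnegative real coefficients (your telescoping identity exhibits it as $\tfrac{1}{2}$ times a product of two such polynomials), and then use only the elementary observations that $\langle B^{2m}\phi\mid\phi\rangle=\|B^{m}\phi\|^{2}\geq 0$ and $\langle B^{2m+1}\phi\mid\phi\rangle=\langle B(B^{m}\phi)\mid B^{m}\phi\rangle\geq 0$, so that any nonnegative-coefficient polynomial in $B$ is a positive operator. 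The bound $C_{n+1}\leq I$ can likewise be obtained without the commuting-positives fact, from $I-C_{n+1}=\tfrac{1}{2}A+\tfrac{1}{2}(I-C_{n}^{2})$ together with $\|C_{n}\|\leq 1$, the latter following from $0\leq C_{n}\leq I$ and the identity $\|T\|=\sup_{\|\phi\|=1}|\langle\phi\mid T\phi\rangle|$ for self-adjoint $T$ recorded in the paper; the same norm bound absorbs the implicit factor $\|C_{m}-C_{n}\|^{3}$ in your Cauchy--Schwarz estimate. With these substitutions the argument is complete.
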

\begin{proposition}\cite{Ric} Let $A\in\B(V_\mathbb{H}^L)$. If $A\geq 0$, then $A$ is self-adjoint.
	\end{proposition}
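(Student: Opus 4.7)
The plan is to reduce the claim to the characterization of self-adjointness already recorded as Proposition 2.1. By Definition \ref{Def1}, the hypothesis $A\geq 0$ means that $\langle A\phi\mid\phi\rangle\geq 0$ for every $\phi\in V_\mathbb{H}^L$. In particular, the quantity $\langle A\phi\mid\phi\rangle$ is a non-negative real number, so it lies in $\mathbb{R}$ for every $\phi$.

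Once this is observed, Proposition 2.1 applies directly: a bounded left linear operator $T\in\B(V_\mathbb{H}^L)$ is self-adjoint if and only if $\langle T\phi\mid\phi\rangle\in\mathbb{R}$ for all $\phi\in V_\mathbb{H}^L$. Taking $T=A$ yields $A=A^\dagger$, which is exactly the conclusion.

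There is essentially no obstacle here, since the only conceptual ingredient specific to the quaternionic setting, namely the equivalence between the reality of the diagonal quadratic form $\phi\mapsto\langle A\phi\mid\phi\rangle$ and self-adjointness, is precisely the content of the already-cited Proposition 2.1 from \cite{Fa}. Consequently the proof will consist of one sentence recalling the definition of positivity and one sentence invoking Proposition 2.1; no polarization-type computation in $\HQ$ is needed, which is fortunate because the usual complex polarization identity must be handled with care in a non-commutative inner product setting.
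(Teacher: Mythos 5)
Your argument is correct: reading $A\geq 0$ as $\langle A\phi\mid\phi\rangle\geq 0$ for all $\phi$ forces this quantity to be real, and the cited characterization (Proposition 2.1, from \cite{Fa}) then gives $A=A^{\dagger}$ immediately. The paper itself offers no proof of this statement, merely citing \cite{Ric}, so there is nothing to contrast with; your one-line reduction is the natural and complete justification, and you are right that no quaternionic polarization computation is needed since Proposition 2.1 already encapsulates it.
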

\begin{lemma}\label{L3}
	Let $U_{\mathbb{H}}^{L}$ and $V_{\mathbb{H}}^{L}$ be left quaternion Hilbert spaces. Let $T:\D(A)\longrightarrow V_{\mathbb{H}}^{L}$ be a linear operator with domain $\D(T)\subseteq U_{\mathbb{H}}^{L} $ and $\mbox{ran}(T)\subseteq V_{\mathbb{H}}^{L},$ then the inverse $T^{-1}:\mbox{ran}(T)\longrightarrow \D(T) $ exists if and only if $T\phi=0\Rightarrow \phi=0.$
\end{lemma}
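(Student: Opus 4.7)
The plan is to carry out the standard functional-analytic argument that a linear map has an inverse on its range precisely when it has trivial kernel, and to verify that nothing in the argument relies on commutativity of the scalar field, so the proof transfers verbatim from the complex to the left quaternionic setting.

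First I would handle the forward implication. Assume $T^{-1}:\mathrm{ran}(T)\longrightarrow \mathcal{D}(T)$ exists. Since $T$ is linear and $0\in\mathcal{D}(T)$, we have $T(0)=0$. If $T\phi=0$ for some $\phi\in\mathcal{D}(T)$, then both $\phi$ and $0$ are sent by $T$ to the same element $0\in\mathrm{ran}(T)$. Applying $T^{-1}$ (which, by assumption, is a well-defined function) forces $\phi=T^{-1}(0)=0$. This gives the implication $T\phi=0\Rightarrow \phi=0$.

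For the converse, I would show that the kernel condition forces $T$ to be injective, which is exactly what is needed to define $T^{-1}$ as a function on $\mathrm{ran}(T)$. Suppose $T\phi_1=T\phi_2$ for $\phi_1,\phi_2\in\mathcal{D}(T)$. By left linearity of $T$ (in particular additivity, which is part of the definition recalled just before the lemma), $T(\phi_1-\phi_2)=T\phi_1-T\phi_2=0$. The hypothesis then yields $\phi_1-\phi_2=0$, i.e., $\phi_1=\phi_2$. Hence the assignment $T\phi\mapsto\phi$ is well defined on $\mathrm{ran}(T)$, giving the desired inverse $T^{-1}:\mathrm{ran}(T)\longrightarrow \mathcal{D}(T)$.

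There is really no main obstacle here: the argument uses only additivity of $T$ and the set-theoretic definition of invertibility, neither of which is sensitive to whether scalars multiply from the left or whether they commute. In particular, we never need the quaternionic homogeneity $T(q\phi)=qT\phi$, so the non-commutativity of $\mathbb{H}$ plays no role. The one thing worth being careful about in writing up is to state explicitly where linearity is being used (only additivity) and to note that $0\in\mathcal{D}(T)$ since $\mathcal{D}(T)$ is a subspace of $U_{\mathbb{H}}^{L}$.
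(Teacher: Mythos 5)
Your proof is correct and is exactly the standard injectivity argument that the paper itself invokes by simply deferring to the complex counterpart (the paper's ``proof'' is a one-line remark that non-commutativity plays no role). You have merely written out explicitly what the paper leaves implicit, including the correct observation that only additivity of $T$ is needed.
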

\begin{proof}Since the non-commutativity of quaternions does not play a role in the proof, it follows from its complex counterpart.
\end{proof}\noindent
\begin{lemma}\label{L2}
	Let $T\in\B(V_\mathbb{H}^L)$ be a self-adjoint operator, then
	\begin{equation}\label{E13} 
	\left\|T\right\|=\sup_{\left\|\phi\right\|=1}\left|\left\langle \phi|T\phi \right\rangle\right|
	\end{equation} 		 
\end{lemma}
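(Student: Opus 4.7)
The plan is to prove both inequalities $M \leq \|T\|$ and $\|T\| \leq M$, where $M := \sup_{\|\phi\|=1} |\langle \phi | T\phi \rangle|$. The first inequality is the easy direction: by Cauchy--Schwartz (which the excerpt asserts holds on quaternionic Hilbert spaces), for $\|\phi\|=1$ we have $|\langle \phi | T\phi\rangle| \leq \|\phi\|\,\|T\phi\| \leq \|T\|$, hence $M \leq \|T\|$.

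For the reverse inequality I would begin with a polarization-type identity. Expanding $\langle \phi\pm\psi\mid T(\phi\pm\psi)\rangle$ using properties (c), (d), (e) and subtracting yields
\begin{equation*}
\langle\phi+\psi\mid T(\phi+\psi)\rangle - \langle\phi-\psi\mid T(\phi-\psi)\rangle = 2\langle\phi\mid T\psi\rangle + 2\langle\psi\mid T\phi\rangle.
\end{equation*}
Self-adjointness gives $\langle\psi\mid T\phi\rangle = \langle T\psi\mid\phi\rangle = \overline{\langle\phi\mid T\psi\rangle}$, so the right-hand side equals $4\,\mathrm{Re}\,\langle\phi\mid T\psi\rangle$ (here $\mathrm{Re}\,q := (q+\overline{q})/2$, which is a real number and hence satisfies the usual absolute-value inequality $|\mathrm{Re}\,q|\leq|q|$). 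Using $|\langle\chi\mid T\chi\rangle|\leq M\|\chi\|^{2}$ together with the parallelogram identity $\|\phi+\psi\|^{2}+\|\phi-\psi\|^{2} = 2(\|\phi\|^{2}+\|\psi\|^{2})$, I obtain $|\mathrm{Re}\,\langle\phi\mid T\psi\rangle|\leq M$ whenever $\|\phi\|=\|\psi\|=1$.

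The main obstacle, specific to the non-commutative setting, is passing from $\mathrm{Re}\,\langle\phi\mid T\psi\rangle$ to $|\langle\phi\mid T\psi\rangle|$; in the complex case one rotates $\psi$ by a phase, but here one must handle a full unit quaternion. Set $q := \langle\phi\mid T\psi\rangle$; if $q=0$ there is nothing to do, otherwise put $u := q/|q|$, a unit quaternion. By property (e), $\langle\phi\mid T(u\psi)\rangle = \langle\phi\mid u\,T\psi\rangle = \langle\phi\mid T\psi\rangle\,\overline{u} = q\overline{u} = |q|$, which is real and positive. Moreover $\|u\psi\|^{2} = \langle u\psi\mid u\psi\rangle = u\langle\psi\mid\psi\rangle\overline{u} = |u|^{2} = 1$, so $u\psi$ is still a unit vector. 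The bound from the previous paragraph, applied with $\psi$ replaced by $u\psi$, then gives $|\langle\phi\mid T\psi\rangle| = |q| = \mathrm{Re}\,\langle\phi\mid T(u\psi)\rangle \leq M$.

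Finally, to conclude $\|T\|\leq M$, I would use the standard identity $\|T\psi\| = \sup_{\|\phi\|=1}|\langle\phi\mid T\psi\rangle|$, which holds here by Cauchy--Schwartz with equality achieved at $\phi = T\psi/\|T\psi\|$ (note that $1/\|T\psi\|$ is real and thus commutes, so the verification that this $\phi$ attains the supremum is unaffected by non-commutativity). The bound $|\langle\phi\mid T\psi\rangle|\leq M$ for all unit $\phi$ then yields $\|T\psi\|\leq M$ for all unit $\psi$, whence $\|T\|\leq M$, completing the proof.
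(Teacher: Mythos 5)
Your proof is correct. It is worth noting that the paper does not actually prove this lemma: it simply asserts that ``the non-commutativity of quaternions does not play a role'' and that the statement follows from its complex counterpart, citing \cite{Fa}. What you have done is carry out that adaptation in full, and in doing so you have correctly isolated the one step where the quaternionic setting genuinely demands care --- replacing the complex phase rotation by left multiplication with the unit quaternion $u=q/\abs{q}$, then using property (e) to get $\langle\phi\mid T(u\psi)\rangle=q\overline{u}=\abs{q}$ and checking that $u\psi$ remains a unit vector. All the other ingredients (Cauchy--Schwartz, the polarization/parallelogram computation, $\langle\psi\mid T\phi\rangle=\overline{\langle\phi\mid T\psi\rangle}$ from self-adjointness, the real-part bound, and $\|T\psi\|=\sup_{\|\phi\|=1}\abs{\langle\phi\mid T\psi\rangle}$) go through exactly as in the complex case, and you verify the quaternionic details where needed (e.g.\ $\abs{\mathrm{Re}\,q}\le\abs{q}$ and the commutation of real scalars). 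One tiny point you use implicitly: additivity of the inner product in the \emph{first} slot, which is not listed among the paper's axioms (a)--(e) but follows immediately from (a) and (c); you could mention this when expanding $\langle\phi\pm\psi\mid T(\phi\pm\psi)\rangle$. Your argument thus substantiates, rather than merely asserts, the paper's claim.
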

\begin{proof}
	In fact, the non-commutativity of quaternions does not play a role in the proof, the proof follows from its complex counterpart. One may also see \cite{Fa} where a proof is given.
\end{proof}\noindent
We define $\mathcal{GL}(V_{\mathbb{H}}^{L}),$ the set of all bounded linear operators in $V_{\mathbb{H}}^{L}$ with bounded inverse.  $$\mathcal{GL}(V_{\mathbb{H}}^{L})=\left\{T:V_{\mathbb{H}}^{L}\longrightarrow V_{\mathbb{H}}^{L}: T\mbox{~bounded and~} T^{-1}\mbox{~bounded}\right\}.$$
Also   $\mathcal{GL}^{+}(V_{\mathbb{H}}^{L}) $ is the set of positive operators in $\mathcal{GL}(V_{\mathbb{H}}^{L}).$
\begin{proposition}\label{pro1}
	Let $T:V_{\mathbb{H}}^{L}\longrightarrow V_{\mathbb{H}}^{L}$ be a left linear operator. Then the following are equivalent statements:
	\begin{itemize}
		\item [i.]There exist $m>0$ and $M<\infty$ such that $mI_{V_{\mathbb{H}}^{L}}\leq T\leq MI_{V_{\mathbb{H}}^{L}};$
		\item[ii.] $T$ is positive and there exist $m>0$ and $M<\infty$ such that $m\left\| f\right\|^{2}\leq \left\| T^{\frac{1}{2}}f\right\|^{2} \leq M\left\| f\right\|^{2}; $
		\item [iii.] $T$ is positive and $T^{\frac{1}{2}}\in \mathcal{GL}(V_{\mathbb{H}}^{L}) ;$
		\item[iv.]There exists a self-adjoint operator $A\in \mathcal{GL}(V_{\mathbb{H}}^{L})$ such that $A^{2}=T;$
		\item[v.] $T\in \mathcal{GL}^{+}(V_{\mathbb{H}}^{L}).$
	\end{itemize}
\end{proposition}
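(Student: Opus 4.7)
The plan is to close the cycle (i) $\Rightarrow$ (ii) $\Rightarrow$ (iii) $\Rightarrow$ (iv) $\Rightarrow$ (v) $\Rightarrow$ (i), which yields the equivalence of all five statements. Three ingredients supplied by the preceding material will do most of the work: the existence of the positive square root (Theorem \ref{IT1}), the identity $\|T\|=\sup_{\|\phi\|=1}|\langle \phi|T\phi\rangle|$ for self-adjoint $T$ (Lemma \ref{L2}), and the invertibility criterion of Lemma \ref{L3}. The key computational fact used throughout is the identity $\|T^{1/2}\phi\|^{2}=\langle T\phi|\phi\rangle$, obtained by moving $T^{1/2}$ across the inner product via its self-adjointness and noting that $\langle T\phi|\phi\rangle \in \R$ for self-adjoint $T$; non-commutativity of the scalars plays no role beyond this point.

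The four chained implications are routine. For (i) $\Rightarrow$ (ii), positivity of $T$ is part of $mI \leq T$, and the above identity converts the operator bounds into $m\|\phi\|^{2}\leq \|T^{1/2}\phi\|^{2}\leq M\|\phi\|^{2}$. For (ii) $\Rightarrow$ (iii), the lower estimate $\|T^{1/2}\phi\|\geq \sqrt{m}\,\|\phi\|$ gives injectivity and hence, by Lemma \ref{L3}, an inverse on $\mathrm{ran}(T^{1/2})$ with operator norm $\leq m^{-1/2}$; the same estimate makes the range closed, and density follows from the standard identification of $\mathrm{ran}(T^{1/2})^{\perp}$ with $\ker(T^{1/2})=\{0\}$ via self-adjointness, so $\mathrm{ran}(T^{1/2})=V_{\mathbb{H}}^{L}$. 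For (iii) $\Rightarrow$ (iv) set $A:=T^{1/2}$, which is self-adjoint (positive operators are self-adjoint) and lies in $\mathcal{GL}(V_{\mathbb{H}}^{L})$ by hypothesis. For (iv) $\Rightarrow$ (v), $T=A^{\dagger}A$ gives $\langle T\phi|\phi\rangle = \|A\phi\|^{2}\geq 0$, and $T^{-1}=A^{-1}A^{-1}$ is bounded as a product.

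The remaining implication (v) $\Rightarrow$ (i) is the main step. The upper bound $T\leq \|T\|\,I$ is immediate from Lemma \ref{L2} together with $\langle \phi|T\phi\rangle\geq 0$. For the lower bound I intend to use the operator inequality
\[
T^{2}\leq \|T\|\,T,
\]
which follows by sandwiching $\|T\|I-T\geq 0$ between copies of $T^{1/2}$: one has $T^{1/2}(\|T\|I-T)T^{1/2}=\|T\|\,T-T^{2}$, and sandwiching preserves positivity since $\langle T^{1/2}CT^{1/2}\phi|\phi\rangle = \langle CT^{1/2}\phi|T^{1/2}\phi\rangle$. Evaluating this inequality at $\phi$ gives $\|T\phi\|^{2}\leq \|T\|\,\langle T\phi|\phi\rangle$, and combining with $\|T\phi\|\geq \|\phi\|/\|T^{-1}\|$ (just boundedness of $T^{-1}$) yields $\langle T\phi|\phi\rangle \geq \|\phi\|^{2}/(\|T\|\,\|T^{-1}\|^{2})$, so $m:=1/(\|T\|\,\|T^{-1}\|^{2})$ and $M:=\|T\|$ witness (i).

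The main obstacle I anticipate is the lower bound in (v) $\Rightarrow$ (i): without invoking spectral theory, the positive constant $m$ has to be manufactured from $\|T\|$ and $\|T^{-1}\|$, which forces the use of the auxiliary inequality $T^{2}\leq \|T\|\,T$ rather than a direct scalar estimate. A minor subsidiary point is the surjectivity of $T^{1/2}$ in (ii) $\Rightarrow$ (iii), which depends on the standard Hilbert-space fact that a bounded-below self-adjoint operator is onto; as in the preceding lemmas, this transfers from the complex case without modification and can be recorded with a brief remark.
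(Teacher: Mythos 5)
Your proof is correct, but it is organized differently from the paper's. The paper establishes the equivalence as a chain of biconditionals (i)$\Leftrightarrow$(ii)$\Leftrightarrow$(iii)$\Leftrightarrow$(iv)$\Leftrightarrow$(v), so every step passes through the square root $T^{1/2}$ of Theorem \ref{IT1}; in particular the "hard" direction from invertibility back to the operator inequality (i) is absorbed into (iii)$\Rightarrow$(ii), where boundedness of $T^{1/2}$ and $(T^{1/2})^{-1}$ immediately give $m=\|(T^{1/2})^{-1}\|^{-2}$ and $M=\|T^{1/2}\|^{2}$ (the paper merely asserts this, while your cycle makes you prove the analogous fact for $T$ itself). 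Your (v)$\Rightarrow$(i) via the auxiliary inequality $T^{2}\leq\|T\|\,T$ and the resulting constant $m=1/(\|T\|\,\|T^{-1}\|^{2})$ is a genuinely different and somewhat heavier argument than anything in the paper, though it is sound: the sandwiching step $\langle T^{1/2}CT^{1/2}\phi|\phi\rangle=\langle CT^{1/2}\phi|T^{1/2}\phi\rangle$ is legitimate in the left quaternionic setting. You would get the same conclusion more cheaply by imitating the paper: from (v) take $A=T^{1/2}\in\mathcal{GL}(V_{\mathbb{H}}^{L})$ and write $\langle T\phi|\phi\rangle=\|A\phi\|^{2}$, which is pinched between $\|A^{-1}\|^{-2}\|\phi\|^{2}$ and $\|A\|^{2}\|\phi\|^{2}$. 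On the other hand, your treatment of (ii)$\Rightarrow$(iii) is more careful than the paper's: you actually prove surjectivity of $T^{1/2}$ (closed range from the lower bound plus $\mathrm{ran}(T^{1/2})^{\perp}=\ker(T^{1/2})=\{0\}$), whereas the paper only verifies injectivity and boundedness of the inverse on the range, leaving the "onto" part implicit. Net effect: the paper's route is shorter because it lets the square root carry all the invertibility information; yours is self-contained on the one genuinely nontrivial implication and patches a gap the paper leaves open.
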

\begin{proof}
For (i.)$\Rightarrow$(ii.), suppose that  $m>0$ and $M<\infty$ such that $mI_{V_{\mathbb{H}}^{L}}\leq T\leq MV_{\mathbb{H}}^{L}.$
Let $f\in V_{\mathbb{H}}^{L}$ then $\left\langle mI_{V_{\mathbb{H}}^{L}}f|f \right\rangle \leq\left\langle Tf|f \right\rangle  \leq \left\langle MI_{V_{\mathbb{H}}^{L}}f|f \right\rangle  .$ It follows that 
\begin{equation}\label{s21}
m\left\| f\right\|^{2}\leq \left\langle Tf|f\right\rangle\leq M\left\|f\right\|^{2}  
\end{equation}	
Hence $\left\langle Tf|f\right\rangle \geq m\left\| f\right\|^{2}\geq 0,$ as $m>0.$ Thereby $\left\langle Tf|f\right\rangle \geq 0$ and $T$ is positive.\\
Since $T$ is positive, from Theorem \ref{IT1}, there exists a unique operator in $\B(V_\mathbb{H}^L)$, indicated by $\sqrt{T}=T^{1/2}$ such that $\sqrt{T}\geq 0$ and $\sqrt{T}\sqrt{T}=T.$\\
Now equation \ref{s21} becomes
\begin{equation*}
m\left\| f\right\|^{2}\leq \left\langle T^{1/2}T^{1/2}f|f\right\rangle\leq M\left\|f\right\|^{2}.  
\end{equation*}
It follows that
\begin{equation*}
m\left\| f\right\|^{2}\leq \left\langle T^{1/2}f|(T^{1/2})^{\dagger}f\right\rangle\leq M\left\|f\right\|^{2}  
\end{equation*}
and
\begin{equation*}
m\left\| f\right\|^{2}\leq \left\langle T^{1/2}f|T^{1/2}f\right\rangle\leq M\left\|f\right\|^{2},\mbox{~as~}  T^{1/2} \mbox{~is positive.~}
\end{equation*}
Thereby $m\left\| f\right\|^{2}\leq \left\| T^{\frac{1}{2}}f\right\|^{2} \leq M\left\| f\right\|^{2}.$\\
For (ii)$\Rightarrow$(i), suppose that  $T$ is positive and there exists $m>0$ and $M<\infty$ such that $m\left\| f\right\|^{2}\leq \left\| T^{\frac{1}{2}}f\right\|^{2} \leq M\left\| f\right\|^{2}.$ Then
\begin{equation*}
m\left\| f\right\|^{2}\leq \left\langle T^{1/2}f|T^{1/2}f\right\rangle\leq M\left\|f\right\|^{2}.
\end{equation*}
It follows that
\begin{equation*}
m\left\| f\right\|^{2}\leq \left\langle (T^{1/2})^{\dagger}T^{1/2}f|f\right\rangle\leq M\left\|f\right\|^{2}  
\end{equation*}
and
\begin{equation*}
m\left\| f\right\|^{2}\leq \left\langle T^{1/2}T^{1/2}f|f\right\rangle\leq M\left\|f\right\|^{2},\mbox{~as~}  T^{1/2} \mbox{~is positive.~}
\end{equation*}
Hence 
\begin{equation*}
m\left\| f\right\|^{2}\leq \left\langle Tf|f\right\rangle\leq M\left\|f\right\|^{2}.  
\end{equation*}	
Therefore $mI_{V_{\mathbb{H}}^{L}}\leq T\leq M I_{V_{\mathbb{H}}^{L}}.$\\
For (ii.)$\Rightarrow$ (iii.), suppose that $T$ is positive and there exists $m>0$ and $M<\infty$ such that $m\left\| f\right\|^{2}\leq \left\| T^{\frac{1}{2}}f\right\|^{2} \leq M\left\| f\right\|^{2}.$ Since $T$ is positive, from Theorem \ref{IT1}, $T^{\frac{1}{2}}$ is bounded. Let $f\in V_\mathbb{H}^L,$ assume that $ T^{\frac{1 }{2}}f=0$ then
$m\left\| f\right\|^{2}\leq \left\| 0\right\|^{2} \leq M\left\| f\right\|^{2}.$ It follows that $f=0,$ from Lemma \ref{L3},$(T^{\frac{1}{2}})^{-1}:V_\mathbb{H}^L\longrightarrow V_\mathbb{H}^L$ exists. For $f\in V_\mathbb{H}^L,$ there exists $g\in V_\mathbb{H}^L$ such that $T^{\frac{1}{2}}f=g.$ That is $f=(T^{\frac{1}{2}})^{-1}g.$ Now $m\left\| f\right\|^{2}\leq \left\| T^{\frac{1}{2}}f\right\|^{2}$ implies
\begin{eqnarray*}
	m\left\| (T^{\frac{1}{2}})^{-1}g\right\|^{2}&\leq &\left\| g\right\|^{2}
\end{eqnarray*} 
and
\begin{eqnarray*}
	\left\| (T^{\frac{1}{2}})^{-1}g\right\|&\leq &\frac{1}{\sqrt{m}}\left\| g\right\|.
\end{eqnarray*} 
It follows that $(T^{\frac{1}{2}})^{-1}$ is bounded and hence $T^{\frac{1}{2}}\in \mathcal{GL}(V_{\mathbb{H}}^{L}). $\\
For (iii)$\Rightarrow$(ii), suppose that $T$ is positive and $T^{\frac{1}{2}}\in \mathcal{GL}(V_{\mathbb{H}}^{L}). $ Then $T^{\frac{1}{2}}$ is bounded and  $(T^{\frac{1}{2}})^{-1}$ is also bounded. Therefore, one may conclude that there exists $m>0$ and $M<\infty$ such that $m\left\| f\right\|^{2}\leq \left\| T^{\frac{1}{2}}f\right\|^{2} \leq M\left\| f\right\|^{2}.$\\
For (iii.)$\Rightarrow$(iv.), assume that $T$ is positive and $T^{\frac{1}{2}}\in \mathcal{GL}(V_{\mathbb{H}}^{L}).$ 
 Since $T$ is positive, from Theorem \ref{IT1}, there exists a unique operator in $\B(V_\mathbb{H}^L)$, indicated by $\sqrt{T}=T^{1/2}$ such that $\sqrt{T}\geq 0$ and $(\sqrt{T})^{2}=T.$ If we take $A=T^{1/2}$ then $A$ is self adjoint as $\sqrt{T}\geq 0$  and $A\in \mathcal{GL}(V_{\mathbb{H}}^{L}). $ Hence there exists a self-adjoint operator $A\in \mathcal{GL}(V_{\mathbb{H}}^{L})$ such that $A^{2}=T.$\\
 For (iv.)$\Rightarrow$(iii.), suppose that there exists a self-adjoint operator $A\in \mathcal{GL}(V_{\mathbb{H}}^{L})$ such that $A^{2}=T.$ If we take $A=T^{1/2}$ then $T^{\frac{1}{2}}\in \mathcal{GL}(V_{\mathbb{H}}^{L})$ and $T$ is positive.\\  
 For (iv.)$\Rightarrow$ (v.), assume that there exists a self-adjoint operator $A\in \mathcal{GL}(V_{\mathbb{H}}^{L})$ such that $A^{2}=T.$ Then clearly $T\in \mathcal{GL}^{+}(V_{\mathbb{H}}^{L}).$ \\
 For (v.)$\Rightarrow$ (iv.), assume that $T\in \mathcal{GL}^{+}(V_{\mathbb{H}}^{L}).$ Then $T$ is positive and bounded. From Theorem \ref{IT1}, there exists a unique operator $A(=T^{1/2})$ in $\mathcal{GL}(V_\mathbb{H}^L)$ such that $A\geq 0$ and $A.A=T$. It follows that there exists a self-adjoint operator $A\in \mathcal{GL}(V_{\mathbb{H}}^{L})$ such that $A^{2}=T.$
\end{proof}
\subsection{Frames and Frame operators}
Let $V_\mathbb{H}^L$ be a finite dimensional left quaternion Hilbert space. A countable family of elements $\left\{\phi_{k}\right\}_{k\in I}$ in $V_\mathbb{H}^L$  is a frame for $V_\mathbb{H}^L$  if there exist constants $A,B>0$ such that
\begin{equation}\label{eq5}
A\left\|\psi\right\|^{2}\leq\displaystyle\sum_{k\in I}\left|\left\langle \psi|\phi_{k}\right\rangle\right|^{2}\leq B\left\|\psi\right\|^{2},\quad~\text{for all}~ \psi\in V^{R}_{H}.
\end{equation}
The numbers $A$ and $B$ are called frame bounds. They are not unique. The \textit{optimal lower frame bound }is the supremum over all lower frame bounds, and the \textit{optimal upper frame bound} is the infimum over all upper frame bounds.  The frame $\left\{\phi_{k}\right\}_{k\in I}$ is said to be normalized if $\left\|\phi_{k}\right\|=1,~\mbox{\text{for all}~} k\in I.$ \\
Let $\left\{\phi_{k}\right\}_{k\in I}$ be  a frame  on a left quaternionic Hilbert space $V_\HQ^L$ and define a linear mapping 
\begin{equation}\label{eq15}
T:\HQ^{|I|}\longrightarrow V^{L} _{\mathbb{H}},~T\left\{q_{k}\right\}_{k\in I}=\displaystyle\sum_{k\in I} q_{k}\phi_{k},~q_{k}\in \HQ,
\end{equation} 
where $|I|$ is the cardinality of $I$. $T$ is usually called the \textit{pre-frame operator}, or the \textit{synthesis operator.}\\
The adjoint operator
\begin{equation}\label{eq16}
T^{\dagger}:V^{L}_{\mathbb{H}}\longrightarrow \HQ^{|I|}, \mbox{~~by~~}T^{\dagger}\psi=\left\{\left\langle \psi|\phi_{k}\right\rangle\right\}_{k\in I}	
\end{equation}
is called the \textit{analysis operator.}\\
By composing $T$ with its adjoint we obtain the \textit{frame operator}
\begin{equation}\label{eq17}
S:V^{L}_{\mathbb{H}}\longrightarrow V^{L} _{\mathbb{H}},~S\psi=TT^{\dagger}\psi=\displaystyle\sum_{k\in I}\left\langle \psi|\phi_{k}\right\rangle\phi_{k}	.
\end{equation}
Note that in terms of the frame operator, for $\psi\in V^{L} _{\mathbb{H}}$
\begin{eqnarray*}
	\left\langle S\psi|\psi\right\rangle =\left\langle \sum_{k=1}^{m}\left\langle \psi|\phi_{k}\right\rangle \phi_{k}\middle|\psi\right\rangle
	=\sum_{k=1}^{m}\left\langle \psi|\phi_{k}\right\rangle\left\langle \phi_{k}|\psi\right\rangle
	=\sum_{k=1}^{m}\left|\left\langle \psi|\phi_{k}\right\rangle\right|^{2}.
\end{eqnarray*}
\begin{proposition}\cite{KH}\label{KK}
Let $\left\{\phi_{k}\right\}_{k\in I}$ be a frame for $V^{L}_{\HQ}$ with frame operator $S.$ Then
\begin{itemize}
\item[i.] $S$ is invertible and self-adjoint.
\item[ii.] Every $\psi\in V_{\HQ}^{L},$ can be represented as
	$$\psi=\sum_{k\in I}\left\langle \psi|S^{-1}\phi_{k}\right\rangle \phi_{k}=\sum_{k\in I}\left\langle \psi|\phi_{k}\right\rangle S^{-1} \phi_{k}.$$
\end{itemize}
\end{proposition}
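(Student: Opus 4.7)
The plan is to mimic the standard complex-frame argument, with careful bookkeeping of the left quaternionic scalar conventions introduced in Section 2.

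First I would prove self-adjointness of $S$. Since $S=TT^{\dagger}$ by (\ref{eq17}), the identity $(TT^{\dagger})^{\dagger}=T^{\dagger\dagger}T^{\dagger}=TT^{\dagger}$ gives $S^{\dagger}=S$; the usual rules $(AB)^{\dagger}=B^{\dagger}A^{\dagger}$ and $A^{\dagger\dagger}=A$ transfer verbatim to $\B(V_\HQ^L)$ because the defining relation (\ref{Ad1}) together with property (e) of the quaternionic inner product does not involve commutativity. For invertibility I would read the frame inequality (\ref{eq5}) through the computation displayed just before Proposition 2.8, which gives $\langle S\psi\mid\psi\rangle=\sum_k|\langle\psi\mid\phi_k\rangle|^{2}$. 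Hence for all $\psi\in V_\HQ^L$,
\begin{equation*}
A\|\psi\|^{2}\le \langle S\psi\mid\psi\rangle\le B\|\psi\|^{2},
\end{equation*}
which, by Definition \ref{Def1}, is exactly $A\,I_{V_\HQ^L}\le S\le B\,I_{V_\HQ^L}$ with $A>0$. Proposition \ref{pro1} (implication (i)$\Rightarrow$(v)) then yields $S\in\mathcal{GL}^{+}(V_\HQ^L)$, so $S^{-1}$ exists and is bounded.

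For the reconstruction formulas I would first observe that $S^{-1}$ is itself self-adjoint: from $SS^{-1}=S^{-1}S=I$ and $S=S^{\dagger}$ one computes
\begin{equation*}
\langle S^{-1}\phi\mid\psi\rangle=\langle S^{-1}\phi\mid S S^{-1}\psi\rangle=\langle S S^{-1}\phi\mid S^{-1}\psi\rangle=\langle\phi\mid S^{-1}\psi\rangle,
\end{equation*}
so $(S^{-1})^{\dagger}=S^{-1}$. Now write $\psi=S(S^{-1}\psi)$ and apply (\ref{eq17}) to $S^{-1}\psi$ to obtain $\psi=\sum_k\langle S^{-1}\psi\mid\phi_k\rangle\phi_k$; moving $S^{-1}$ across the bra via the adjoint gives $\langle S^{-1}\psi\mid\phi_k\rangle=\langle\psi\mid(S^{-1})^{\dagger}\phi_k\rangle=\langle\psi\mid S^{-1}\phi_k\rangle$, which is the first identity. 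The second identity comes from $\psi=S^{-1}S\psi=S^{-1}\bigl(\sum_k\langle\psi\mid\phi_k\rangle\phi_k\bigr)$; here I would invoke left linearity of $S^{-1}$ to pull the scalars $\langle\psi\mid\phi_k\rangle\in\HQ$ through the operator, yielding $\sum_k\langle\psi\mid\phi_k\rangle S^{-1}\phi_k$.

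The only genuinely delicate point—and the spot I expect to be the main obstacle—is precisely this last step: in a left quaternionic Hilbert space the inner product sits on the \emph{left} of the ket, so one must justify $S^{-1}(q\phi_k)=q\,S^{-1}\phi_k$ using left linearity rather than the commutative rearrangement that would be automatic over $\C$. Property (d) of the inner product together with the definition of left linearity makes this legitimate, but it is the step where the non-commutativity of $\HQ$ could otherwise cause the identity to fail (for instance, one could not freely commute $\langle\psi\mid\phi_k\rangle$ past $\phi_k$ itself). All other manipulations are formal and carry over from the complex case essentially unchanged.
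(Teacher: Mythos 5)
Your proposal is correct. Note that the paper states Proposition \ref{KK} without proof, importing it from \cite{KH}; your argument ($S^{\dagger}=(TT^{\dagger})^{\dagger}=TT^{\dagger}$, the bound $AI_{V_{\HQ}^{L}}\leq S\leq BI_{V_{\HQ}^{L}}$ combined with Proposition \ref{pro1} for invertibility, and the two factorizations $\psi=S(S^{-1}\psi)$ and $\psi=S^{-1}(S\psi)$ for the reconstruction formulas) is precisely the standard route taken there, and your flagged step --- that pulling the quaternionic scalars $\langle\psi|\phi_{k}\rangle$ through $S^{-1}$ rests on left linearity of $S^{-1}$ rather than on commutativity --- is indeed the only place where the quaternionic conventions require care.
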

\section{Controlled Frames in Left Quaternion Hilbert Spaces}
\noindent
Controlled frames were first introduced for spherical wavelets  in \cite{Bog} to get a numerically more efficient approximation algorithm. For general frames, it was developed in \cite{Bal4}.   In this section the concept of controlled frames in quaternionic Hilbert spaces, along the lines of the arguments of \cite{Bal4}, is presented. 
\begin{definition}\label{DEF1}
Let $\mathfrak{C}\in \mathcal{GL}(V_{\mathbb{H}}^{L}).$ A countable family of vectors $\Phi=\{\phi_{k}\in V^{L}_{\HQ} : k\in I \}$	is said to be a frame controlled by the operator $\mathfrak{C}$ or the $\mathfrak{C}-$ controlled frame if there exist  constants $0<A _{\mathfrak{C} S}\leq B_{\mathfrak{C} S}<\infty$  such that 

\begin{equation}\label{CFDE}
A _{\mathfrak{C} S}\left\|\psi \right\|^{2} \leq\displaystyle\sum_{k\in I}\langle\psi|\phi_{k}\rangle\langle \mathfrak{C} \phi_{k}|\psi\rangle\leq B_{\mathfrak{C} S}\left\|\psi \right\|^{2} , 
\end{equation}
for all $\psi\in V_{\mathbb{H}}^{L}. $
\end{definition}\noindent
Controlled frame operator can be defined as 
\begin{equation}
S_{\mathfrak{C}}\psi=\displaystyle\sum_{k\in I}\langle \psi|\phi_{k}\rangle\mathfrak{C}\phi_{k}.
\end{equation}
Now we have the frame operator
\begin{equation}
S:V^{L}_{\mathbb{H}}\longrightarrow V^{L} _{\mathbb{H}},~S\psi=TT^{\dagger}\psi=\displaystyle\sum_{k\in I}\left\langle \psi|\phi_{k}\right\rangle\phi_{k}	.
\end{equation}
For $\mathfrak{C}:V_{\mathbb{H}}^{L}\longrightarrow V_{\mathbb{H}}^{L},$ consider
\begin{eqnarray*}
\left\langle\mathfrak{C}S\psi|\psi \right\rangle &=&\left\langle \mathfrak{C}\left(\sum_{k\in I}\left\langle \psi|\phi_{k}\right\rangle\phi_{k}	 \right)|\psi \right\rangle\\
&= & \left\langle \sum_{k\in I}\left\langle \psi|\phi_{k}\right\rangle\mathfrak{C}\phi_{k}|\psi\right\rangle\\
&=&\sum_{k\in I}\left\langle \psi|\phi_{k}\right\rangle\left\langle \mathfrak{C}\phi_{k}|\psi\right\rangle.
\end{eqnarray*}
Now equation \ref{CFDE} becomes
\begin{equation}\label{CFDE1}
A _{\mathfrak{C} S}\left\|\psi \right\|^{2} \leq\left\langle\mathfrak{C}S\psi|\psi \right\rangle\leq B_{\mathfrak{C} S}\left\|\psi \right\|^{2} , 
\end{equation}
for all $\psi\in V_{\mathbb{H}}^{L}. $
That is, there exist  constants $0<A _{\mathfrak{C} S}\leq B_{\mathfrak{C} S}<\infty$  such that 
\begin{equation*}
A _{\mathfrak{C} S}I_{V^{L}_{\mathbb{H}}}\leq\mathfrak{C} S\leq B _{\mathfrak{C} S}I_{V^{L}_{\mathbb{H}}}.
\end{equation*}
From Proposition \ref{pro1}, $\mathfrak{C}S\in \mathcal{GL}^{+}(V_{\mathbb{H}}^{L}), $
and the definition \ref{DEF1} is clearly equivalent to $\mathfrak{C}S\in \mathcal{GL}^{+}(V_{\mathbb{H}}^{L}). $
\begin{proposition}\label{NC}
Let $\Phi=\{\phi_{k}\in V^{L}_{\HQ} : k\in I \}$ be a $\mathfrak{C}-$ controlled frame in $V^{L}_{\mathbb{H}}$ for $\mathfrak{C}\in \mathcal{GL}(V_{\mathbb{H}}^{L}).$ Then $\Phi$ is a frame in $V^{L}_{\mathbb{H}}.$ Moreover $\mathfrak{C} S=S\mathfrak{C}^{\dagger}$  and $$\displaystyle\sum_{k\in I}\left\langle \psi|\phi_{k}\right\rangle\mathfrak{C} \phi_{k}=\displaystyle\sum_{k\in I}\left\langle \psi|\mathfrak{C}\phi_{k}\right\rangle\phi_{k},$$
for all $\psi\in V_{\mathbb{H}}^{L}. $	
\end{proposition}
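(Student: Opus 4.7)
The strategy is to leverage the remark made just before the statement, namely that the $\mathfrak{C}$-controlled frame condition is \emph{equivalent} to $\mathfrak{C}S\in\mathcal{GL}^+(V_\mathbb{H}^L)$. I want to promote this to the stronger statement $S\in\mathcal{GL}^+(V_\mathbb{H}^L)$; once that is done, Proposition \ref{pro1} translates directly into the ordinary frame inequalities, and the commutation claims follow from the self-adjointness that $\mathcal{GL}^+$ membership delivers for free.

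\textbf{Step 1: $S\in\mathcal{GL}^+(V_\mathbb{H}^L)$.} I first check that the formally defined operator $S\psi=\sum_k\langle\psi|\phi_k\rangle\phi_k$ is self-adjoint (a direct bracket computation using properties (d)--(e) of the left quaternionic inner product gives $\langle S\psi|\phi\rangle=\sum_k\langle\psi|\phi_k\rangle\langle\phi_k|\phi\rangle=\langle\psi|S\phi\rangle$) and positive ($\langle S\psi|\psi\rangle=\sum_k|\langle\psi|\phi_k\rangle|^2\geq 0$). Next I use the factorisation $S=\mathfrak{C}^{-1}(\mathfrak{C}S)$: both $\mathfrak{C}^{-1}$ and $\mathfrak{C}S$ belong to $\mathcal{GL}(V_\mathbb{H}^L)$ (the former by hypothesis on $\mathfrak{C}$, the latter because $\mathcal{GL}^+\subset\mathcal{GL}$), and this class is closed under composition, so $S\in\mathcal{GL}(V_\mathbb{H}^L)$. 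Combined with positivity this gives $S\in\mathcal{GL}^+(V_\mathbb{H}^L)$.

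\textbf{Step 2: the three conclusions.} Applying clause (i) of Proposition \ref{pro1} to $S$ produces $m,M>0$ with $mI_{V_\mathbb{H}^L}\leq S\leq MI_{V_\mathbb{H}^L}$; pairing against $\psi$ and recognising $\langle S\psi|\psi\rangle=\sum_k|\langle\psi|\phi_k\rangle|^2$ yields the frame inequality, so $\Phi$ is a frame. For $\mathfrak{C}S=S\mathfrak{C}^\dagger$, since $\mathfrak{C}S$ is positive it is self-adjoint, hence $(\mathfrak{C}S)^\dagger=S^\dagger\mathfrak{C}^\dagger=\mathfrak{C}S$, and using $S^\dagger=S$ from Step 1 this rearranges to $S\mathfrak{C}^\dagger=\mathfrak{C}S$. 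The summation identity is obtained by computing both sides: the left-hand sum equals $\mathfrak{C}\bigl(\sum_k\langle\psi|\phi_k\rangle\phi_k\bigr)=\mathfrak{C}S\psi$ by linearity and boundedness of $\mathfrak{C}$ applied to the now norm-convergent series, while the right-hand sum, after rewriting $\langle\psi|\mathfrak{C}\phi_k\rangle=\langle\mathfrak{C}^\dagger\psi|\phi_k\rangle$, becomes $S\mathfrak{C}^\dagger\psi$; the commutation identity just proved closes the loop.

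The only mildly delicate point is Step 1: one needs positivity (from the raw sum formula) \emph{and} invertibility (from the factorisation through $\mathfrak{C}S$) separately, because neither alone places $S$ in $\mathcal{GL}^+(V_\mathbb{H}^L)$. Once $S$ sits there, the remainder is bookkeeping driven by Proposition \ref{pro1} and the characterisation ``positive $\Rightarrow$ self-adjoint'' already quoted in Section 2.
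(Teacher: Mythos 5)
Your proposal is correct and follows essentially the same route as the paper: both obtain invertibility of $S$ from the factorisation $S=\mathfrak{C}^{-1}(\mathfrak{C}S)$ with $\mathfrak{C}S=S_{\mathfrak{C}}\in\mathcal{GL}^{+}(V_{\mathbb{H}}^{L})$, derive $\mathfrak{C}S=S\mathfrak{C}^{\dagger}$ from the self-adjointness of the positive operator $\mathfrak{C}S$, and get the summation identity by rewriting $\langle\psi|\mathfrak{C}\phi_{k}\rangle=\langle\mathfrak{C}^{\dagger}\psi|\phi_{k}\rangle$. Your explicit check that $S$ is positive (needed to invoke Proposition \ref{pro1} for the lower frame bound) is a point the paper passes over more quickly, but it is the same argument.
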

\begin{proof}
Let $\left\{\phi_{k}\right\}_{k\in I}$ be a $\mathfrak{C}-$ controlled frame in $V^{L}_{\mathbb{H}}$ for $\mathfrak{C}\in \mathcal{GL}(V_{\mathbb{H}}^{L}).$ Then there exist  constants $0<A _{\mathfrak{C} S}\leq B_{\mathfrak{C} S}<\infty$  such that 
\begin{equation}
A _{\mathfrak{C} S}\left\|\psi \right\|^{2} \leq\displaystyle\sum_{k\in I}\langle\psi|\phi_{k}|\rangle\langle \mathfrak{C} \phi_{k}|\psi\rangle\leq B_{\mathfrak{C} S}\left\|\psi \right\|^{2} , 
\end{equation}
for all $\psi\in V_{\mathbb{H}}^{L}. $ It follows that

\begin{equation}
A _{\mathfrak{C} S}\left\langle I_{V^{L}_{\mathbb{H}}}\psi|\psi \right\rangle  \leq\left\langle S_{\mathfrak{C}}\psi|\psi\right\rangle\leq B_{\mathfrak{C} S}\left\langle I_{V^{L}_{\mathbb{H}}}\psi|\psi \right\rangle  , 
\end{equation}
for all $\psi\in V_{\mathbb{H}}^{L}. $\\
 Hence $A_{\mathfrak{C} S}I_{V^{L}_{\mathbb{H}}} \leq  S_{\mathfrak{C}} \leq B_{\mathfrak{C} S}I_{V^{L}_{\mathbb{H}}}.$ From Proposition \ref{pro1}, $S_{\mathfrak{C}}\in \mathcal{GL}^{+}(V_{\mathbb{H}}^{L}). $\\
 Define $\widehat{S}=\mathfrak{C}^{-1}S_{\mathfrak{C}}.$  Then $\widehat{S}\in \mathcal{GL}(V_{\mathbb{H}}^{L})$ as  $\mathfrak{C}^{-1},S_{\mathfrak{C}}\in \mathcal{GL}(V_{\mathbb{H}}^{L}). $\\
Let $\psi\in V_{\mathbb{H}}^{L} $ then
\begin{eqnarray*}
\widehat{S}\psi &=&\mathfrak{C}^{-1}S_{\mathfrak{C}}\psi\\
&=&\mathfrak{C}^{-1}\left(\sum_{k\in I}\langle \psi|\phi_{k}\rangle\mathfrak{C}\phi_{k} \right) \\
&=&\sum_{k\in I}\langle \psi|\phi_{k}\rangle\mathfrak{C}^{-1}\mathfrak{C}\phi_{k}\\
&=&\sum_{k\in I}\langle \psi|\phi_{k}\rangle I_{V^{L}_{\mathbb{H}}}\phi_{k}\\
&=&\sum_{k\in I}\langle \psi|\phi_{k}\rangle \phi_{k}\\
&=&S\psi
\end{eqnarray*}
Hence $S$ is everywhere defined and $S\in \mathcal{GL}(V_{\mathbb{H}}^{L}).$ Thereby $\Phi$ is a frame in $V^{L}_{\mathbb{H}}.$\\
Since $S_{\mathfrak{C}}\in \mathcal{GL}^{+}(V_{\mathbb{H}}^{L}),~S_{\mathfrak{C}}$ is positive, therefore $S_{\mathfrak{C}}$
is self-adjoint.\\
For $\psi\in V^{L}_{\mathbb{H}}, $
\begin{eqnarray*}
\mathfrak{C} S\psi&=&\mathfrak{C}\left( \sum_{k\in I}\left\langle \psi|\phi_{k}\right\rangle\phi_{k}	\right)\\
&=& \sum_{k\in I}\left\langle \psi|\phi_{k}\right\rangle \mathfrak{C}\phi_{k}\\
&=&S_{\mathfrak{C}}\psi.
\end{eqnarray*}
Hence $\mathfrak{C} S=S_{\mathfrak{C}}.$\\
Now 
$S_{\mathfrak{C}}^{\dagger}=(\mathfrak{C} S)^{\dagger}=S^{\dagger}\mathfrak{C}^{\dagger}=S\mathfrak{C}^{\dagger}.$ But $S_{\mathfrak{C}}^{\dagger}=S_{\mathfrak{C}},$ as $S_{\mathfrak{C}}$ is self-adjoint. Therefore $S_{\mathfrak{C}}=S\mathfrak{C}^{\dagger}.$ 
It follows that $ \mathfrak{C} S=S\mathfrak{C}^{\dagger}.$\\
Also for $\psi\in V^{L}_{\mathbb{H}},  ~\mathfrak{C} S\psi=S\mathfrak{C}^{\dagger}\psi$ and $S_{\mathfrak{C}}\psi=S\mathfrak{C}^{\dagger}\psi.$ 
Hence
\begin{eqnarray*}
\sum_{k\in I}\left\langle \psi|\phi_{k}\right\rangle\mathfrak{C} \phi_{k}&=&S_{\mathfrak{C}}\psi\\
&=&S\mathfrak{C}^{\dagger}\psi\\
&=&\sum_{k\in I}\left\langle \mathfrak{C}^{\dagger}\psi|\phi_{k}\right\rangle\phi_{k}\\
&=&\sum_{k\in I}\left\langle \psi|\mathfrak{C}\phi_{k}\right\rangle\phi_{k}		
\end{eqnarray*}
Thereby $\displaystyle\sum_{k\in I}\left\langle \psi|\phi_{k}\right\rangle\mathfrak{C} \phi_{k}=\displaystyle\sum_{k\in I}\left\langle \psi|\mathfrak{C}\phi_{k}\right\rangle\phi_{k}.$
\end{proof}\noindent
The above Proposition shows that every controlled frame is a usual frame. But if $\mathfrak{C}\in \mathcal{GL}(V_{\mathbb{H}}^{L})$ is self-adjoint, we can give a necessary and sufficient condition for a frame to be a controlled frame and vice-versa.

\begin{proposition}\label{CFPro}
Let $\mathfrak{C}\in \mathcal{GL}(V_{\mathbb{H}}^{L})$ be self-adjoint. The family $\left\{\phi_{k}\right\}_{k\in I}$  is a frame controlled by $\mathfrak{C}$ if and only if it is a frame in $V_{\mathbb{H}}^{L}$ and $\mathfrak{C}$ is positive and commutes with the frame operator $S.$
\end{proposition}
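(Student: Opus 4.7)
The plan is to exploit the operator identity $\langle \mathfrak{C}S\psi\mid\psi\rangle=\sum_{k\in I}\langle \psi\mid\phi_{k}\rangle\langle \mathfrak{C}\phi_{k}\mid\psi\rangle$ derived in the paragraph preceding Proposition \ref{NC}, together with Proposition \ref{pro1}, to convert the controlled frame inequality \eqref{CFDE} into the operator inequality $A_{\mathfrak{C}S}I\leq \mathfrak{C}S\leq B_{\mathfrak{C}S}I$, and then to pass back and forth between positivity of $\mathfrak{C}$ and positivity of the product $\mathfrak{C}S$ using the commutativity assumption.

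For the forward implication, assume $\{\phi_{k}\}_{k\in I}$ is a $\mathfrak{C}$-controlled frame. Proposition \ref{NC} already gives that it is an ordinary frame with frame operator $S$, that $\mathfrak{C}S\in\mathcal{GL}^{+}(V_{\mathbb{H}}^{L})$, and that $\mathfrak{C}S=S\mathfrak{C}^{\dagger}$. Since $\mathfrak{C}$ is self-adjoint, this reads $\mathfrak{C}S=S\mathfrak{C}$, which is exactly the commutation claim. It remains to show $\mathfrak{C}\geq 0$. For this I would write $\mathfrak{C}=(\mathfrak{C}S)S^{-1}$ and observe that both factors are positive self-adjoint elements of $\mathcal{GL}(V_{\mathbb{H}}^{L})$ (for $S^{-1}$ this uses Proposition \ref{KK} and the standard fact that the inverse of a positive invertible self-adjoint operator is positive self-adjoint), and that they commute because $\mathfrak{C}$ commutes with $S$ and hence with $S^{-1}$. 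The product of two commuting positive self-adjoint bounded operators is positive, so $\mathfrak{C}\geq 0$.

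For the converse, assume $\{\phi_{k}\}_{k\in I}$ is a frame, $\mathfrak{C}\geq 0$, and $\mathfrak{C}S=S\mathfrak{C}$. Then $\mathfrak{C}$ and $S$ are commuting positive self-adjoint operators, so $\mathfrak{C}S$ is positive; furthermore $\mathfrak{C}S\in\mathcal{GL}(V_{\mathbb{H}}^{L})$ since both factors are. Hence $\mathfrak{C}S\in\mathcal{GL}^{+}(V_{\mathbb{H}}^{L})$ and Proposition \ref{pro1} supplies constants $0<A_{\mathfrak{C}S}\leq B_{\mathfrak{C}S}<\infty$ with $A_{\mathfrak{C}S}I\leq\mathfrak{C}S\leq B_{\mathfrak{C}S}I$. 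Taking inner products with $\psi$ and using the identity displayed above yields \eqref{CFDE}, so $\{\phi_{k}\}_{k\in I}$ is $\mathfrak{C}$-controlled.

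The only nontrivial step is establishing positivity of $\mathfrak{C}$ in the forward direction; everything else is a direct translation between \eqref{CFDE} and $\mathfrak{C}S\in\mathcal{GL}^{+}(V_{\mathbb{H}}^{L})$ via Proposition \ref{pro1}. The main obstacle is ensuring that ``product of commuting positive bounded self-adjoint operators is positive'' transfers faithfully to the quaternionic setting; since the standard proof (via square roots $\mathfrak{C}S=\sqrt{\mathfrak{C}}\,S\sqrt{\mathfrak{C}}$, using commutativity to move $\sqrt{\mathfrak{C}}$ across $S$) relies only on Theorem \ref{IT1} and the self-adjointness of $\sqrt{\mathfrak{C}}$, which are available here, this obstacle dissolves.
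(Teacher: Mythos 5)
Your proposal is correct and follows essentially the same route as the paper: Proposition \ref{NC} plus self-adjointness yields the commutation $\mathfrak{C}S=S\mathfrak{C}$, positivity of $\mathfrak{C}$ is obtained by factoring it as a product of commuting positive invertible operators, and the converse is the translation between \eqref{CFDE} and $\mathfrak{C}S\in\mathcal{GL}^{+}(V_{\mathbb{H}}^{L})$ via Proposition \ref{pro1}. If anything you are more careful than the paper, which simply writes $\mathfrak{C}=S_{\mathfrak{C}}S^{-1}$ and asserts positivity, whereas you spell out the ``product of commuting positive self-adjoint operators is positive'' lemma and check it survives in the quaternionic setting.
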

\begin{proof}
Let $\mathfrak{C}\in \mathcal{GL}(V_{\mathbb{H}}^{L})$ be self-adjoint.  Suppose that $\left\{\phi_{k}\right\}_{k\in I}$  is a frame controlled by $\mathfrak{C}.$  Then from Proposition \ref{NC}, $\left\{\phi_{k}\right\}_{k\in I}$ is a frame in $V^{L}_{\mathbb{H}}$  and  $\mathfrak{C}S=S\mathfrak{C}^{\dagger}.$ Since $\mathfrak{C}$ is self-adjoint, $\mathfrak{C}=\mathfrak{C}^{\dagger}.$	Hence $\mathfrak{C}S=S\mathfrak{C}.$ Thereby $\mathfrak{C}$ commutes with the  frame operator $S.$  It follows that $\mathfrak{C}=S\mathfrak{C} S^{-1}=S_{\mathfrak{C}}S^{-1}$ and  $\mathfrak{C}$ is positive.\\
On the other hand suppose that $\left\{\phi_{k}\right\}_{k\in I}$  is a frame  in $V_{\mathbb{H}}^{L}$ with frame operator $S$ and $\mathfrak{C}$ is positive and commutes with $S.$  Then $S\in \mathcal{GL}^{+}(V_{\mathbb{H}}^{L}).$ Therefore $\mathfrak{C} S=S_{\mathfrak{C}}\in \mathcal{GL}^{+}(V_{\mathbb{H}}^{L})$ and so $S_{\mathfrak{C}}$ is positive.\\
From Proposition \ref{pro1}, there exist $A>0$ and $B<\infty$ such that 
\begin{equation}\label{con}
AI_{V_{\mathbb{H}}^{L}}\leq S_{\mathfrak{C}}\leq BI_{V_{\mathbb{H}}^{L}}.
\end{equation} 
For $\psi\in V_{\mathbb{H}}^{L}, $ \ref{con} becomes
\begin{equation}
\langle AI_{V_{\mathbb{H}}^{L}}\psi|\psi\rangle\leq \langle S_{\mathfrak{C}}\psi|\psi\rangle\leq \langle BI_{V_{\mathbb{H}}^{L}}\psi|\psi\rangle.
\end{equation}
It follows that
\begin{equation}
A\left\| \psi\right\| ^{2}\leq \sum_{k\in I}\langle \psi|\phi_{k}\rangle\left\langle \mathfrak{C}\phi_{k}|\psi\right\rangle \leq B\left\| \psi\right\| ^{2}
\end{equation}
Hence $\left\{\phi_{k}\right\}_{k\in I}$  is a frame controlled by $\mathfrak{C}.$ 
\end{proof}
\section{Weighted frames and frame multipliers in $V_{\mathbb{H}}^L$}\noindent
In this section we present  connection between frame multipliers and weighted frames in a left quaternionic Hilbert space.
\begin{definition}
Let $\{\phi_{k}\}_{k\in I}$ be a sequence of elements in $V_{\mathbb{H}}^{L}$ and $\{\omega_{k}\}_{k\in I}\subseteq \mathbb{R}^{+}$ a sequence of positive weights.This pair is called a $\omega-$ frame for $V_{\mathbb{H}}^{L}$  if there exist constants $A>0$ and $B<\infty$ such that
	\begin{equation}
	A\left\|\psi \right\|^{2}\leq \sum \omega_{k}\left|\left\langle \psi|\phi_{k} \right\rangle \right|^{2}\leq   B\left\| \psi\right\|^{2}, 
	\end{equation}
	for all $\psi\in V_{\mathbb{H}}^{L}.$\\

\end{definition}
\begin{definition}
	A sequence $\{\zeta_{n}\}$ is called semi-normalized if there are bounds $b\geq a>0$ such that $a\leq\left| \zeta_{n}\right| \leq b.$
\end{definition}
\begin{definition}
Let $ U_{\mathbb{H}}^{L},V_{\mathbb{H}}^{L}$ be left quaternionic Hilbert spaces, let $\{\psi_{k}\}_{k\in I}\subseteq U_{\mathbb{H}}^{L}$ and $\{\phi_{k}\}_{k\in I}\subseteq V_{\mathbb{H}}^{L}$ be frames. Fix  $\mathbf{m}=\{m_{k}\}\in \ell^{\infty}(I).$ 	 Define the operator $\mathbf{M}_{\mathbf{m},\{\phi_{k}\},\{\psi_{k}\}}:U_{\mathbb{H}}^{L}\longrightarrow V_{\mathbb{H}}^{L}$ the \textit{frame multiplier} for the frames $\{\psi_{k}\}$ and $\{\phi_{k}\}$, as the operator
\begin{equation}
\mathbf{M}_{\mathbf{m},\{\phi_{k}\},\{\psi_{k}\}}(h)=\sum_{k\in I}m_{k}\left\langle h|\psi_{k} \right\rangle \phi_{k};\quad h\in U_{\mathbb{H}}^{L}.
\end{equation}
The sequence $\mathbf{m}$ is called the {\em symbol} of $\mathbf{M}.$ We will denote 
$\mathbf{M}_{\mathbf{m},\{\phi_{k}\}}=\mathbf{M}_{\mathbf{m},\{\phi_{k}\},\{\phi_{k}\}}$
\end{definition}
\begin{proposition}
Let $\mathfrak{C}\in  \mathcal{GL}(V_{\mathbb{H}}^{L}) $ be self-adjoint and diagonal on $\Phi=\{\phi_{k}\}_{k\in I}$ and assume it generates a controlled frame. Then the sequence $\{\omega_{k}\}$ which verifies the relation $\mathfrak{C}\phi_{k}=\omega_{k}\phi_{k}$ is semi normalized and positive. Furthermore $\mathfrak{C}=M_{\omega,\widetilde{\Phi},\Phi},$ where $\widetilde{\Phi}=\{L^{-1}\phi_k\}_{k\in I}$ and $L$ is the frame operator for $\Phi$.
\end{proposition}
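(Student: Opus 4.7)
The plan is to first establish that each $\omega_k$ is a strictly positive real scalar admitting uniform two-sided bounds, and then to read the multiplier identity off the canonical dual-frame reconstruction of Proposition \ref{KK} applied to $\mathfrak{C}h$.

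For positivity and semi-normalization, note that since $\mathfrak{C}$ is self-adjoint and $\Phi$ is $\mathfrak{C}$-controlled, Proposition \ref{CFPro} already delivers that $\mathfrak{C}$ is positive. Pairing $\mathfrak{C}\phi_k=\omega_k\phi_k$ against $\phi_k$ gives $\langle\mathfrak{C}\phi_k|\phi_k\rangle=\omega_k\|\phi_k\|^2$; since $\mathfrak{C}$ is self-adjoint this quantity is real, which forces $\omega_k\in\mathbb{R}$, positivity forces $\omega_k\ge 0$, and invertibility of $\mathfrak{C}$ (which makes $\mathfrak{C}\phi_k\ne 0$ whenever $\phi_k\ne 0$) forces $\omega_k>0$. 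Boundedness then gives $\omega_k\|\phi_k\|=\|\mathfrak{C}\phi_k\|\le\|\mathfrak{C}\|\,\|\phi_k\|$, so $\omega_k\le\|\mathfrak{C}\|$; applying $\mathfrak{C}^{-1}$ to the defining relation yields $\phi_k=\omega_k\mathfrak{C}^{-1}\phi_k$ and hence $\omega_k\ge\|\mathfrak{C}^{-1}\|^{-1}$. Taking $a=\|\mathfrak{C}^{-1}\|^{-1}$ and $b=\|\mathfrak{C}\|$ supplies the semi-normalized bounds.

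For the multiplier identity I would apply Proposition \ref{KK} to $\mathfrak{C}h$ in the form
\[
\mathfrak{C}h=\sum_{k\in I}\langle\mathfrak{C}h|\phi_k\rangle\,L^{-1}\phi_k.
\]
Using self-adjointness and the inner-product rule $\langle f|qg\rangle=\langle f|g\rangle\overline{q}$, one has $\langle\mathfrak{C}h|\phi_k\rangle=\langle h|\mathfrak{C}\phi_k\rangle=\langle h|\omega_k\phi_k\rangle=\langle h|\phi_k\rangle\overline{\omega_k}=\langle h|\phi_k\rangle\omega_k$. Since $\omega_k\in\mathbb{R}$ commutes with the quaternion $\langle h|\phi_k\rangle$, this rewrites as $\mathfrak{C}h=\sum_k\omega_k\langle h|\phi_k\rangle L^{-1}\phi_k$, which is precisely $\mathbf{M}_{\omega,\widetilde{\Phi},\Phi}(h)$ by the definition of the frame multiplier.

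The only delicate step is the quaternionic ordering of scalars, both in the conjugation appearing in the inner product and in the left-scalar action on vectors; once $\omega_k$ is known to be real, these orderings collapse and the argument runs parallel to the complex case.
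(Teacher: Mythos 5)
Your proof is correct, and it departs from the paper's argument in two places that are worth recording. For the semi-normalization, the paper routes through Proposition \ref{pro1}: it takes the positive square root $\mathfrak{C}^{1/2}$, asserts $\mathfrak{C}^{1/2}\phi_{k}=\sqrt{\omega_{k}}\,\phi_{k}$, and reads $0<A\leq\omega_{k}\leq B$ off the inequality $A\|\psi\|^{2}\leq\|\mathfrak{C}^{1/2}\psi\|^{2}\leq B\|\psi\|^{2}$; your direct estimates $\|\mathfrak{C}^{-1}\|^{-1}\leq\omega_{k}\leq\|\mathfrak{C}\|$ are more elementary, and they also establish the reality and strict positivity of $\omega_{k}$ explicitly, facts the paper takes for granted the moment it writes $\sqrt{\omega_{k}}$. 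For the multiplier identity, the paper expands $\psi$ itself via Proposition \ref{KK} and then pulls $\mathfrak{C}$ out of the sum, which silently uses $\mathfrak{C}\widetilde{\phi}_{k}=\omega_{k}\widetilde{\phi}_{k}$, i.e.\ that $\mathfrak{C}$ commutes with $L^{-1}$ (true because Proposition \ref{CFPro} gives $\mathfrak{C}L=L\mathfrak{C}$, but this is left unstated); you instead expand $\mathfrak{C}h$ and transfer $\mathfrak{C}$ across the inner product by self-adjointness, which needs only the eigenrelation on the $\phi_{k}$ themselves. Both proofs share the same skeleton --- Proposition \ref{CFPro} for positivity and for $\Phi$ being a frame, Proposition \ref{KK} for reconstruction, and the reality of $\omega_{k}$ to handle the quaternionic ordering --- so the gain in your version is chiefly that it closes these two small gaps. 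The one assumption you share with the paper without comment is $\phi_{k}\neq 0$, without which $\omega_{k}$ is not determined by the relation $\mathfrak{C}\phi_{k}=\omega_{k}\phi_{k}$.
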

\begin{proof}
Since $\mathfrak{C}\in  \mathcal{GL}(V_{\mathbb{H}}^{L}) $ is self-adjoint and $\Phi=\{\phi_{k}\}$  is a frame controlled by the operator $\mathfrak{C}$, from Proposition \ref{CFPro}, $\mathfrak{C}$ is positive.  Thereby $\mathfrak{C}\in \mathcal{GL}^{+}(V_{\mathbb{H}}^{L}).$ Since  $\mathfrak{C}\in \mathcal{GL}^{+}(V_{\mathbb{H}}^{L}),$ from Proposition \ref{pro1}, there exists $A>0$ and $B<\infty$ such that
\begin{equation}\label{CFM}
A\left\|\psi\right\|^{2}\leq\left\|\mathfrak{C}^{\frac{1}{2}} \psi\right\|^{2} \leq
B\left\|\psi\right\|^{2},
\end{equation}
for all $ \psi\in V_{\mathbb{H}}^{L}.$ Since $\mathfrak{C}\phi_{k}=\omega_{k}\phi_{k},~ \mathfrak{C}^{\frac{1}{2}}\phi_{k}=\sqrt{\omega_{k}}\phi_{k}.$ Now equation \ref{CFM} gives
\begin{equation}
0<A\leq \omega_{k}\leq B.
\end{equation}
Hence the sequence $\{\omega_{k}\}$ is positive and semi-normalized.\\
Since $\mathfrak{C}\in  \mathcal{GL}(V_{\mathbb{H}}^{L}) $ is self-adjoint and $\Phi=\{\phi_{k}\}$  is a frame controlled by the operator $\mathfrak{C}$, from Proposition \ref{CFPro}, $\Phi=\{\phi_{k}\}$  is a frame in $V_{\mathbb{H}}^{L}.	$\\
Let $\psi\in V_{\mathbb{H}}^{L}$ then, by Proposition \ref{KK}, $\psi=\displaystyle\sum_{k\in I}\left\langle\psi|\phi_{k} \right\rangle L^{-1}\phi_{k}, $  where $L$ is the frame operator for $\Phi=\{\phi_{k}\}.$\\
Now
\begin{eqnarray*}
M_{\omega,\widetilde{\Phi},\Phi}\psi&=&\sum_{k\in I}\omega_{k}\left\langle \psi|\phi_{k}\right\rangle \widetilde{\phi}_{k}\\
&=&\sum_{k\in I}\left\langle \psi|\phi_{k}\right\rangle \omega_{k}\widetilde{\phi}_{k}\mbox{~as~}\omega_{k}\mbox{~is real}\\
&=&\sum_{k\in I}\left\langle \psi|\phi_{k}\right\rangle \mathfrak{C}\widetilde{\phi}_{k}\\
&=&\mathfrak{C}\left(\sum_{k\in I}\left\langle \psi|\phi_{k}\right\rangle\widetilde{\phi}_{k} \right) \\
&=&\mathfrak{C}\left(\sum_{k\in I}\left\langle \psi|\phi_{k}\right\rangle L^{-1}\phi_{k} \right) \\
&=&\mathfrak{C}\psi.
\end{eqnarray*}
Hence $\mathfrak{C}=M_{\omega,\widetilde{\Phi},\Phi}.$
\end{proof}
\begin{lemma}\label{WL1}
Let $\{\omega_{k}\}$ be a semi normalized sequence with bounds $a\mbox{~and~} b.$ If $\{\phi_{k}\}$ is a frame with bounds $A\mbox{~and~}B$ then $\{\omega_{k}\phi_{k}\}$ is also a frame with bounds $a^{2}A\mbox{~and~}b^{2}B.$
\end{lemma}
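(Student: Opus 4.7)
The plan is to reduce everything to the frame inequality for $\{\phi_k\}$ by pulling the weights $\omega_k$ out of the inner product. First, I would apply property (e) of the inner product, namely $\langle \psi \mid q g\rangle = \langle \psi \mid g\rangle \overline{q}$, to rewrite $\langle \psi \mid \omega_k \phi_k\rangle = \langle \psi \mid \phi_k\rangle \overline{\omega_k}$. Taking moduli and using the quaternionic identity $|q p| = |q||p|$ (which follows from $|q|^2 = q\overline{q}$ and the fact that $q$ commutes with $\overline{q}$), this gives
\begin{equation*}
\bigl|\langle \psi \mid \omega_k \phi_k\rangle\bigr|^{2} = |\omega_k|^{2}\,\bigl|\langle \psi \mid \phi_k\rangle\bigr|^{2}.
\end{equation*}

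Next, since $\{\omega_k\}$ is semi-normalized with $a \leq |\omega_k| \leq b$, I would apply these bounds termwise and sum over $k \in I$ to sandwich
\begin{equation*}
a^{2}\sum_{k\in I}\bigl|\langle \psi \mid \phi_k\rangle\bigr|^{2} \;\leq\; \sum_{k\in I}\bigl|\langle \psi \mid \omega_k\phi_k\rangle\bigr|^{2} \;\leq\; b^{2}\sum_{k\in I}\bigl|\langle \psi \mid \phi_k\rangle\bigr|^{2}.
\end{equation*}
Finally, invoking the frame inequality $A\|\psi\|^2 \leq \sum_{k} |\langle \psi \mid \phi_k\rangle|^2 \leq B\|\psi\|^2$ for $\{\phi_k\}$ and substituting it into both sides of the sandwich yields the desired bounds $a^{2}A$ and $b^{2}B$ for $\{\omega_k \phi_k\}$.

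There is no real obstacle here; the only thing to be careful about is verifying that the scalar modulus factors out of the inner product correctly in the non-commutative quaternionic setting, but this is immediate from property (e) and the multiplicativity of $|\cdot|$ on $\mathbb{H}$. The rest is a direct two-line manipulation.
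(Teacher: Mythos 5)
Your proposal is correct and follows essentially the same route as the paper: factor out $|\omega_k|^2$ from each term via $\langle \psi\mid \omega_k\phi_k\rangle = \langle\psi\mid\phi_k\rangle\overline{\omega_k}$, apply the semi-normalization bounds termwise, and invoke the frame inequality for $\{\phi_k\}$. Your extra remark justifying the multiplicativity of the modulus in the quaternionic setting is a welcome detail that the paper leaves implicit.
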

\begin{proof}
Since $\{\omega_{k}\}$ is semi normalized, there exists $b\geq a>0$ such that
\begin{equation}
a\leq\left| \omega_{k}\right| \leq b.
\end{equation} 
Since $\{\phi_{k}\}$ is a frame with bounds $A\mbox{~and~}B,$
\begin{equation}
A\left\| \psi\right\|^{2}\leq 	\sum_{k\in I} \left|\left\langle \psi|\phi_{k} \right\rangle  \right|^{2}\leq B\left\| \psi\right\|^{2},
\end{equation}
for all $\psi\in V_{\mathbb{H}}^{L}.$\\ 
Let $\psi\in V_{\mathbb{H}}^{L}$ then $\left|\left\langle \psi|\omega_{k}\phi_{k} \right\rangle  \right|^{2}=\left|\omega_{k}\right|^{2}\left|\left\langle \psi|\phi_{k}\right\rangle \right|^{2}.$ Thereby
\begin{eqnarray*}
\sum_{k\in I} \left|\left\langle \psi|\omega_{k}\phi_{k} \right\rangle  \right|^{2}&=&	\sum_{k\in I} \left|\omega_{k}\right|^{2}\left|\left\langle \psi|\phi_{k}\right\rangle \right|^{2}\\
	&\leq & b^{2}	\sum_{k\in I} \left|\left\langle \psi|\phi_{k}\right\rangle \right|^{2}\\
	&\leq&b^{2}B\left\| \psi\right\| ^{2}
\end{eqnarray*}	
Similarly one can prove that $\displaystyle	\sum_{k\in I} \left|\left\langle \psi|\omega_{k}\phi_{k} \right\rangle  \right|^{2}\geq a^{2}A\left\| \psi\right\| ^{2}.$\\
Hence $a^{2}A\left\| \psi\right\| ^{2}\leq\displaystyle	\sum_{k\in I} \left|\left\langle \psi|\omega_{k}\phi_{k} \right\rangle  \right|^{2}\leq b^{2}B\left\| \psi\right\| ^{2}, \mbox{~for all~} \psi\in V_{\mathbb{H}}^{L}.$\\
It follows that $\{\omega_{k}\phi_{k}\}$ is a frame in $V_{\mathbb{H}}^{L}$ with frame bounds $a^{2}A$ and $b^{2}B.$  
\end{proof}
\begin{lemma}\label{WL2}
Let $\Phi=\{\phi_k\}$ be a frame for $V_{\mathbb{H}}^{L}.$ Let $\mathbf{m}=\{m_{k}\}$ be a positive semi-normalized sequence. Then the multiplier $\mathbf{M}_{\mathbf{m},\Phi}$ is the frame operator of the frame $\{\sqrt{m_{k}}\phi_{k}\}$ and therefore it is positive, self-adjoint and invertible. If $\{m_{k}\}$ is negative and semi-normalized then $\mathbf{M}_{\mathbf{m},\Phi}$ is negative, self-adjoint and invertible.
\end{lemma}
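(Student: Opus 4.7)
The plan is to recognize $\mathbf{M}_{\mathbf{m},\Phi}$ as the frame operator of the rescaled family $\{\sqrt{m_k}\phi_k\}$ and then transfer all desired properties from Proposition \ref{KK} and the frame inequality. First I would compute
\[
\mathbf{M}_{\mathbf{m},\Phi}\psi \;=\; \sum_{k\in I} m_k\langle\psi\mid\phi_k\rangle\,\phi_k
\;=\; \sum_{k\in I} \sqrt{m_k}\,\langle\psi\mid\phi_k\rangle\,\sqrt{m_k}\,\phi_k,
\]
and then, since $\sqrt{m_k}\in\mathbb{R}$ commutes with every quaternion (in particular with $\langle\psi\mid\phi_k\rangle$) and since $\langle\psi\mid\sqrt{m_k}\phi_k\rangle=\langle\psi\mid\phi_k\rangle\overline{\sqrt{m_k}}=\langle\psi\mid\phi_k\rangle\sqrt{m_k}$ by property (e) of the inner product, rewrite the sum as $\sum_k\langle\psi\mid\sqrt{m_k}\phi_k\rangle\,\sqrt{m_k}\phi_k$. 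This is literally the frame operator associated to the family $\{\sqrt{m_k}\phi_k\}$, provided the latter is a frame.

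Second, I would invoke Lemma \ref{WL1} with the weight sequence $\{\sqrt{m_k}\}$: because $\{m_k\}$ is positive and semi-normalized with bounds $a,b>0$, the sequence $\{\sqrt{m_k}\}$ is positive and semi-normalized with bounds $\sqrt{a},\sqrt{b}$, so $\{\sqrt{m_k}\phi_k\}$ is a frame with bounds $aA$ and $bB$ (where $A,B$ are bounds for $\Phi$). Then Proposition \ref{KK} applied to this frame gives that its frame operator $\mathbf{M}_{\mathbf{m},\Phi}$ is self-adjoint and invertible. Positivity is the remaining piece: from the displayed identity
\[
\langle\mathbf{M}_{\mathbf{m},\Phi}\psi\mid\psi\rangle \;=\; \sum_{k\in I}\bigl|\langle\psi\mid\sqrt{m_k}\phi_k\rangle\bigr|^{2} \;\geq\; aA\,\|\psi\|^{2},
\]
so in fact $\mathbf{M}_{\mathbf{m},\Phi}\in\mathcal{GL}^{+}(V_{\mathbb{H}}^{L})$ by Proposition \ref{pro1}.

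Finally, for the negative case, I would reduce to the positive case by noting $-\mathbf{m}=\{-m_k\}$ is a positive semi-normalized sequence, and
\[
\mathbf{M}_{\mathbf{m},\Phi} \;=\; -\mathbf{M}_{-\mathbf{m},\Phi},
\]
which is immediate from the definition and the reality of $m_k$. Applying what was just proved to $-\mathbf{m}$ shows $\mathbf{M}_{-\mathbf{m},\Phi}$ is positive, self-adjoint, and invertible; therefore $\mathbf{M}_{\mathbf{m},\Phi}$ is negative, self-adjoint, and invertible.

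The only place requiring care is the order of factors in the non-commutative setting: one must justify the reshuffling of $\sqrt{m_k}$ past the inner product and the vector $\phi_k$. This is not a real obstacle, since all the $m_k$ are real and hence self-conjugate and central in $\mathbb{H}$, so properties (d), (e) of the inner product legitimize every step. After that, the result is essentially a direct translation of the complex-case argument.
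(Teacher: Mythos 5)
Your proposal is correct and follows essentially the same route as the paper: both identify $\mathbf{M}_{\mathbf{m},\Phi}$ with the frame operator of $\{\sqrt{m_k}\phi_k\}$ by splitting $m_k=\sqrt{m_k}\sqrt{m_k}$ and moving the real scalars through the inner product, invoke Lemma \ref{WL1} to see that the rescaled family is a frame, and handle the negative case by factoring out a sign. Your version is marginally more careful (explicit frame bounds $aA,bB$, positivity via Proposition \ref{pro1} rather than by assertion), but the argument is the same.
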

\begin{proof}
We have the frame multiplier for the frame $\Phi=\{\phi_{k}\}$ 
\begin{equation}
\mathbf{M}_{\mathbf{m},\Phi}\psi=\displaystyle\sum m_{k}\left\langle \psi|\phi_{k} \right\rangle \phi_{k},
\end{equation}
where $\mathbf{m}=\{m_{k}\}$ is the weight sequence.\\
Since $\{m_{k}\}$ is semi-normalized sequence and $\Phi=\{\phi_k\}$ is a frame for $V_{\mathbb{H}}^{L},$ from Lemma \ref{WL1}, $\{\sqrt{ m_{k}}\phi_{k}\}$ is a frame for $V_{\mathbb{H}}^{L}.$ \\
Let $\psi\in V_{\mathbb{H}}^{L} $ then  	
\begin{eqnarray*}
	\mathbf{M}_{\mathbf{m},\Phi}\psi&=&\sum_{k} m_{k}\left\langle \psi|\phi_{k} \right\rangle \phi_{k}\\
	&=&\sum_{k}\left\langle \psi|\sqrt{ m_{k} m_{k}}\phi_{k} \right\rangle \phi_{k}\\
	&=&\sum_{k}\left\langle \psi|\sqrt{ m_{k}}\phi_{k} \right\rangle \sqrt{ m_{k}}\phi_{k}\\
	&=&S_{\sqrt{ m_{k}}\phi_{k}}\psi,
\end{eqnarray*}
where $S_{\sqrt{ m_{k}}\phi_{k}}$ is the frame operator for the frame $\sqrt{ m_{k}}\phi_{k}.$ \\
Hence the frame multiplier $\mathbf{M}_{\mathbf{m},\Phi}$ is the frame operator of the frame $\{\sqrt{m_{k}}\phi_{k}\}.$\\
Since the frame operator is always positive, self-adjoint and invertible,  $\mathbf{M}_{\mathbf{m},\Phi}$ is positive, self-adjoint and invertible.\\
If $\{m_{k}\}$ is negative  then $m_{k}<0,~$ for all $k.$ So that $m_{k}=-\sqrt{\left|m_{k} \right|^{2 }}.$ Thereby
\begin{eqnarray*}
\mathbf{M}_{\mathbf{m},\Phi}\psi&=&-\sum_{k} \sqrt{\left|m_{k} \right|^{2 }}\left\langle \psi|\phi_{k} \right\rangle \phi_{k}\\
&=&-\sum_{k}\left\langle \psi|\sqrt{\left|m_{k} \right|^{2 }}\phi_{k} \right\rangle \phi_{k}\\
&=&-\sum_{k}\left\langle \psi|\sqrt{ \left| m_{k}\right| }\phi_{k} \right\rangle \sqrt{ \left| m_{k}\right| }\phi_{k}\\
&=&-S_{\sqrt{ \left| m_{k}\right| }\phi_{k}}\psi,
\end{eqnarray*}
Hence $\mathbf{M}_{\mathbf{m},\Phi}$ is negative, self-adjoint and invertible as the frame operator $S_{\sqrt{ \left| m_{n}\right| }\phi_{k}}$ is always positive, self-adjoint and invertible.
\end{proof}
\begin{theorem}
Let $\Phi=\{\phi_k\}$ be a sequence of elements in $V_{\mathbb{H}}^{L}.$ Let $\{\omega_{k}\}$ be a sequence of positive,semi-normalized weights. Then the following conditions are equivalent:
\begin{itemize}
\item [i.] $\{\phi_k\}$ is a frame.
\item[ii.] $\mathbf{M}_{\mathbf{m},\Phi}$ is a positive and invertible operator
\item[iii.]There are constants $A>0$ and $B<\infty$ such that $$A\left\| \psi\right\| ^{2}\leq \displaystyle\sum_{k\in I}\omega_{k}\left|\left\langle \psi|\phi_{k} \right\rangle \right|^{2}\leq B\left\| \psi\right\|^{2} .$$
\item[iv.]$\{\sqrt{\omega_{k}}\phi_{k}\}$ is a frame.
\item[v.]$\{\omega_{k}\phi_{k}\}$ is a frame. i.e., the fair $\{\omega_{k}\},\{\phi_{k}\}$ forms a weighted frame.
\end{itemize}	
\end{theorem}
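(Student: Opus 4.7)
The plan is to prove the five conditions equivalent by a short cycle of implications, using the two lemmas \ref{WL1} and \ref{WL2} and Proposition \ref{pro1} as the main tools. Throughout, I read $\mathbf{m}=\{\omega_k\}$, so that $\mathbf{M}_{\mathbf{m},\Phi}\psi=\sum_{k\in I}\omega_k\langle\psi|\phi_k\rangle\phi_k$.

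First I would dispose of the easy equivalences (i)$\Leftrightarrow$(iv)$\Leftrightarrow$(v). Since $\{\omega_k\}$ is positive and semi-normalized with bounds $0<a\leq \omega_k\leq b$, so is $\{\sqrt{\omega_k}\}$ (with bounds $\sqrt{a},\sqrt{b}$) and so is the reciprocal sequence $\{1/\omega_k\}$ (with bounds $1/b,1/a$). Applying Lemma \ref{WL1} with the sequence $\{\sqrt{\omega_k}\}$ gives (i)$\Rightarrow$(iv) and with $\{\omega_k\}$ gives (i)$\Rightarrow$(v); conversely applying it with the semi-normalized reciprocal sequences to the frames $\{\sqrt{\omega_k}\phi_k\}$ or $\{\omega_k\phi_k\}$ recovers (i). So (i), (iv), (v) are all equivalent.

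Next I would show (iii)$\Leftrightarrow$(iv) by the one-line identity $|\langle\psi|\sqrt{\omega_k}\phi_k\rangle|^2=\omega_k|\langle\psi|\phi_k\rangle|^2$, which uses that $\omega_k$ is a positive real (and thus commutes with and equals its own conjugate inside the bra-ket). Summing over $k$ matches the frame inequality for $\{\sqrt{\omega_k}\phi_k\}$ with the inequality in (iii) term by term, so they are literally the same statement.

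Finally I would close the loop with (ii)$\Leftrightarrow$(iii). Computing as on the first page of Section~4,
\begin{equation*}
\langle \mathbf{M}_{\mathbf{m},\Phi}\psi\mid\psi\rangle
=\Big\langle \sum_{k\in I}\omega_k\langle\psi|\phi_k\rangle\phi_k\,\Big|\,\psi\Big\rangle
=\sum_{k\in I}\omega_k|\langle\psi|\phi_k\rangle|^2\in\mathbb{R}.
\end{equation*}
Hence $\mathbf{M}_{\mathbf{m},\Phi}$ is self-adjoint (by the proposition quoted from \cite{Fa}), and (iii) is exactly the statement $AI_{V_\mathbb{H}^L}\leq \mathbf{M}_{\mathbf{m},\Phi}\leq BI_{V_\mathbb{H}^L}$ with $A>0$, $B<\infty$. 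By Proposition \ref{pro1}, this is equivalent to $\mathbf{M}_{\mathbf{m},\Phi}\in\mathcal{GL}^+(V_\mathbb{H}^L)$, i.e.\ to (ii). As a sanity check, (i)$\Rightarrow$(ii) also follows directly from Lemma \ref{WL2}, which identifies $\mathbf{M}_{\mathbf{m},\Phi}$ with the frame operator of $\{\sqrt{\omega_k}\phi_k\}$.

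The only mildly delicate point — the closest thing to an obstacle — is the equivalence (ii)$\Leftrightarrow$(iii), because one must be careful that self-adjointness of $\mathbf{M}_{\mathbf{m},\Phi}$ is available before invoking Proposition \ref{pro1} (which is stated for self-adjoint operators and positivity). This is handled by first observing that $\langle\mathbf{M}_{\mathbf{m},\Phi}\psi\mid\psi\rangle$ is real valued (positivity of the weights is essential here), which forces self-adjointness in the quaternionic setting.
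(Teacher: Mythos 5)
Your proof is correct and follows essentially the same route as the paper: the same appeals to Lemma \ref{WL1}, Lemma \ref{WL2} and Proposition \ref{pro1}, the same identity $\left|\left\langle \psi|\sqrt{\omega_{k}}\phi_{k}\right\rangle\right|^{2}=\omega_{k}\left|\left\langle \psi|\phi_{k}\right\rangle\right|^{2}$, and the same reduction of (ii)$\Leftrightarrow$(iii) to the operator inequality $AI_{V_{\mathbb{H}}^{L}}\leq \mathbf{M}_{\mathbf{m},\Phi}\leq BI_{V_{\mathbb{H}}^{L}}$. Your explicit verification that $\left\langle \mathbf{M}_{\mathbf{m},\Phi}\psi|\psi\right\rangle$ is real (so that $\mathbf{M}_{\mathbf{m},\Phi}$ is self-adjoint and the operator inequalities are meaningful) is a point the paper leaves implicit, and deriving (iv)$\Rightarrow$(i) and (v)$\Rightarrow$(i) from Lemma \ref{WL1} applied to the reciprocal semi-normalized sequences mildly streamlines the paper's direct computations, but the substance is identical.
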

\begin{proof}
For i.$\Rightarrow$ ii., suppose that $\{\phi_{k}\}$ is a frame in $V_{\mathbb{H}}^{L}.$	From Lemma \ref{WL2}, the multiplier $\mathbf{M}_{\mathbf{m},\Phi}$ is a frame operator of the frame $\{\sqrt{m_{k}}\phi_{k}\}$ and therefore it is positive and invertible.\\
 For ii.$\Leftrightarrow$ iii., suppose that	$\mathbf{M}_{\mathbf{m},\Phi}$ is a positive and invertible operator. Then  $\mathbf{M}_{\mathbf{m},\Phi}\in \mathcal{GL}^{+}(V_{\mathbb{H}}^{L}). $ From Proposition \ref{pro1}, there exists $A>0$ and $B<\infty $ such that  $AI_{V_{\mathbb{H}}^{L}}\leq \mathbf{M}_{\mathbf{m},\Phi}\leq BI_{V_{\mathbb{H}}^{L}}.$ 
 It follows that for $\psi\in V_{\mathbb{H}}^{L},$
 \begin{equation*}
 A\left\|\psi \right\| ^{2}\leq \left\langle \mathbf{M}_{\mathbf{m},\Phi}\psi|\psi \right\rangle\leq B\left\| \psi\right\| ^{2}.
 \end{equation*}
 If we take $\mathbf{m}=\{\omega_{k}\}$ then
 \begin{equation*}
 A\left\|\psi \right\| ^{2}\leq \left\langle \sum \omega_{k}\left\langle \psi|\phi_{k} \right\rangle \phi_{k} |\psi \right\rangle\leq B\left\| \psi\right\| ^{2}
 \end{equation*}
 and
 \begin{equation*}
 A\left\|\psi \right\| ^{2}\leq \sum \omega_{k}
 \left|\left\langle \psi|\phi_{k} \right\rangle \right|^{2}\leq B\left\|\psi\right\| ^{2}.
 \end{equation*}
 So that there are constants $A>0$ and $B<\infty$ such that 
 \begin{equation}
 A\left\|\psi \right\| ^{2}\leq \sum \omega_{k}
 \left|\left\langle \psi|\phi_{k} \right\rangle \right|^{2}\leq B\left\| \psi\right\| ^{2}
 \end{equation}
 i.e. the pair $\{\omega_{k}\},\{\phi_{k}\}$ forms a $w-$ frame.\\
 On the other hand suppose that there exist constants $A>0$ and $B<\infty$ such that  $A\left\|\psi \right\| ^{2}\leq \sum \omega_{k}
 \left|\left\langle \psi|\phi_{k} \right\rangle \right|^{2}\leq B\left\| \psi\right\| ^{2}.$ It follows that $A\left\|\psi \right\| ^{2}\leq \left\langle \mathbf{M}_{\mathbf{m},\Phi}\psi|\psi \right\rangle\leq B\left\| \psi\right\| ^{2},$ for all $\psi\in V_{\mathbb{H}}^{L}. $ Hence $AI_{V_{\mathbb{H}}^{L}}\leq \mathbf{M}_{\mathbf{m},\Phi}\leq BI_{V_{\mathbb{H}}^{L}}.$ From Proposition \ref{pro1}, $\mathbf{M}_{\mathbf{m},\Phi}\in \mathcal{GL}^{+}(V_{\mathbb{H}}^{L}). $ Thereby $\mathbf{M}_{\mathbf{m},\Phi}$ is positive and invertible.\\
For iii$\Leftrightarrow$iv, suppose that there exist constants $A>0$ and $B<\infty$ such that 
\begin{equation}
A\left\|\psi \right\| ^{2}\leq \sum \omega_{k}
\left|\left\langle f|\phi_{k} \right\rangle \right|^{2}\leq B\left\| \psi\right\| ^{2},
\end{equation}
for all $\psi\in V_{\mathbb{H}}^{L}.$ Thereby
\begin{equation}
A\left\|\psi \right\| ^{2}\leq \sum 
\left|\left\langle \psi|\sqrt{\omega_{k}}\phi_{k} \right\rangle \right|^{2}\leq B\left\| \psi\right\| ^{2} ,
\end{equation} 
for all $\psi\in V_{\mathbb{H}}^{L},$ as  $\{\omega_{k}\}$ is the sequence of positive weight.  Hence $\{\sqrt{\omega_{k}}\psi_{k}\}$ is a frame in $V_{\mathbb{H}}^{L}.$ In similar argument the converse part can be obtained.\\
For i$\Leftrightarrow$iv, suppose that  $\{\phi_k\}$ is a frame and $\{\omega_{k}\}$ is a sequence of positive, semi-normalized weights. Then there exist constants $A>0$ and $B<\infty$ such that
\begin{equation}\label{411}
A\left\| \psi\right\| ^{2} \leq \sum \left |\left\langle \psi|\phi_{k}\right\rangle  \right|^{2}\leq B\left\|\psi\right\|^{2}, 
\end{equation}
for all $\psi\in V_{\mathbb{H}}^{L}$  
and there exist constants $b\geq a>0$ such that 
\begin{equation}\label{412}
a\leq\left| \omega_{k}\right| \leq b.
\end{equation}  
Let $\psi\in V_{\mathbb{H}}^{L},$ from equations \ref{411} and \ref{412}
	\begin{eqnarray*}
	\sum \left |\left\langle \psi|\sqrt{\omega_{k}}\phi_{k}\right\rangle  \right|^{2} &=& \sum\left| \omega_{k}\right|\left |\left\langle \psi|\phi_{k}\right\rangle  \right|^{2} \\
	&\leq&b B\left\|\psi\right\|^{2}. 
\end{eqnarray*}
In similar way one can prove that $\displaystyle \sum \left |\left\langle \psi|\sqrt{\omega_{k}}\phi_{k}\right\rangle  \right|^{2}\geq aA\left\| \psi\right\| ^{2}.$ 	Hence
\begin{equation*}
aA\left\| \psi\right\| ^{2}\leq \sum \left |\left\langle \psi|\sqrt{\omega_{k}}\phi_{k}\right\rangle  \right|^{2} \leq b B\left\|\psi\right\|^{2}, 
\end{equation*}
for all $\psi\in V_{\mathbb{H}}^{L}.$ Thereby $\{\sqrt{\omega_{k}}\psi_{k}\}$ is a frame in  $ V_{\mathbb{H}}^{L}$ with bounds $aA$ and $bB.$\\
On the other hand suppose that $\{\sqrt{\omega_{k}}\phi_{k}\}$ is a frame. Then there exist constants $A>0$ and $B<\infty$ such that
\begin{equation}\label{413}
A\left\| \psi\right\| ^{2} \leq \sum \left |\left\langle \psi|\sqrt{\omega_{k}}\phi_{k}\right\rangle  \right|^{2}\leq B\left\|\psi\right\|^{2}, 
\end{equation}
for all $\psi\in V_{\mathbb{H}}^{L}. $ It is clear that if $\{w_{k}\}$ is positive and semi-normalized then $\{w_{k}^{-1}\}$ is also positive and semi-normalized. Then there are bounds $s\geq r>0$ such that 
\begin{equation}\label{414}
r\leq\left| \omega_{k}^{-1}\right| \leq s.
\end{equation} 
Let $\psi\in V_{\mathbb{H}}^{L},$ from equations \ref{413} and \ref{414}
\begin{eqnarray*}
	\sum\left| \left\langle \psi|\phi_{k}\right\rangle \right|^{2}&=& \sum\left| \left\langle \psi|\sqrt{\omega_{k}w_{k}^{-1}}\phi_{k}\right\rangle \right|^{2}\\
	&=&\sum\left|\omega_{k}^{-1} \right| \left| \left\langle \psi|\sqrt{\omega_{k}}\phi_{k}\right\rangle \right|^{2}\\
	&\leq&sB\left| \psi\right\| ^{2}.
\end{eqnarray*}
Similarly one can get $\displaystyle\sum\left| \left\langle \psi|\phi_{k}\right\rangle \right|^{2}\geq rA\left| \psi\right\| ^{2}.$ It follows that 
\begin{equation}
rA\left| \psi\right\| ^{2}\leq\sum\left| \left\langle \psi|\phi_{k}\right\rangle \right|^{2}\leq sB\left| \psi\right\| ^{2},
\end{equation}
for all $\psi\in V_{\mathbb{H}}^{L}.$ Thereby $\{\psi_k\}$ is a frame in $ V_{\mathbb{H}}^{L} $  with frame bounds $rA \mbox{~and~} sB.$\\
For v $\Rightarrow$ i, suppose that $\{\omega_{k}\phi_{k}\}$ is a frame. Then there exists $A>0$ and $B<\infty$ such that
\begin{equation}\label{WF}
A\left\| \psi\right\| ^{2} \leq \sum \left |\left\langle \psi|\omega_{k}\phi_{k}\right\rangle  \right|^{2}\leq B\left\|\psi\right\|^{2}, 
\end{equation}
for all $\psi\in V_{\mathbb{H}}^{L}. $ 	We know that $\{\omega_{k}^{2}\}$ is positive and semi-normalized as $\{\omega_{k}\}$ is positive and semi-normalized. Thereby there exist constants $v\geq u>0$ such that
\begin{equation}\label{417}
u\leq\left| \omega_{k}^{2}\right| \leq v.
\end{equation}
Let $\psi\in V_{\mathbb{H}}^{L},$ from equations \ref{WF} and \ref{417},
\begin{eqnarray*}
	A\left\| \psi\right\| ^{2} &\leq& \sum \left |\left\langle \psi|\omega_{k}\phi_{k}\right\rangle  \right|^{2}\\&=&
	\sum \left| \omega_{k}\right|^{2} \left |\left\langle \psi|\phi_{k}\right\rangle  \right|^{2}\\
	&\leq&v\sum \left |\left\langle \psi|\phi_{k}\right\rangle  \right|^{2}.
\end{eqnarray*}
Hence $\displaystyle\frac{A}{v}\left\| \psi\right\| ^{2} \leq \sum \left |\left\langle \psi|\phi_{k}\right\rangle  \right|^{2}. $\\
	Similarly
\begin{eqnarray*}
B\left\|\psi\right\|^{2} &\geq&\sum \left |\left\langle \psi|\omega_{k}\phi_{k}\right\rangle  \right|^{2}\\
&=&	\sum \left| \omega_{k}\right|^{2} \left |\left\langle \psi|\phi_{k}\right\rangle  \right|^{2}\\
	&\geq&u\sum \left |\left\langle \psi|\phi_{k}\right\rangle  \right|^{2}.
\end{eqnarray*}
Hence $\displaystyle\sum\left |\left\langle \psi|\phi_{k}\right\rangle  \right|^{2}\leq\displaystyle\frac{B}{u}\left\| \psi\right\| ^{2}.$ 
It follows that $$\displaystyle\frac{A}{v}\left\| \psi\right\| ^{2} \leq \sum \left |\left\langle \psi|\phi_{k}\right\rangle  \right|^{2}\leq \displaystyle\frac{B}{u}\left\| \psi\right\| ^{2},  $$
for all $\psi\in V_{\mathbb{H}}^{L}. $
Therefore $\{\phi_k\}$ is a frame in $V_{\mathbb{H}}^{L}. $ \\
i$\Rightarrow$ v follows from Lemma \ref{WL1}. This completes the proof.
\end{proof}

\section{Conclusion}
We have shown that, in the discrete case, controlled frames, weighted frames and frame multipliers can be defined in a left quaternionic Hilbert space in almost the same way as they have been defined in a complex Hilbert space. The non-commutativity of quaternions does not cause any significant difficulty in the proofs of the results obtained. However, the complex numbers are two dimensional while the quaternions are four dimensional, hence the structure of quaternionic Hilbert spaces are significantly different from their complex counterparts, and therefore in the application point of view the theory developed in this note may provide some advantages or drawbacks.  The applications of the discrete case developed here and the corresponding continuous theory in the quaternionic setting are yet to be worked out.

\end{document}